\documentclass[11pt]{article}

\renewcommand{\theequation}{\thesection.\arabic{equation}} 
\usepackage{amsmath}    
\usepackage{amsthm}     
\usepackage{amsfonts}   
\usepackage{amssymb}    
\usepackage{mathrsfs}   
\usepackage{stmaryrd}   
\usepackage{color}      
\usepackage{graphicx}   
\usepackage{float}      
\usepackage{placeins}   
\usepackage{enumerate}  
\usepackage{subeqnarray}
\usepackage{verbatim}   
\usepackage{cases}      
\usepackage{needspace}  
\usepackage{booktabs}   
\usepackage{caption}    
\usepackage{titlesec}   
\usepackage{appendix}   

\newtheorem{Theorem}{Theorem}[section]   
\newtheorem{Lemma}[Theorem]{Lemma}       
\newtheorem{Definition}[Theorem]{Definition} 
\newtheorem{Remark}[Theorem]{Remark}     

\newcommand{\beq}{\begin{equation}}
\newcommand{\enq}{\end{equation}}
\newcommand{\eeq}{\end{equation}}
\newcommand{\beqa}{\begin{eqnarray}}
\newcommand{\eeqa}{\end{eqnarray}}
\newcommand{\bqa}{\begin{eqnarray}}
\newcommand{\eqa}{\end{eqnarray}}

\newcommand{\br}{\begin{Remark}}
\newcommand{\er}{\end{Remark}}

\newcommand{\edm}{\end{displaymath}}

\allowdisplaybreaks[3]     
\usepackage{hyperref}
\usepackage{zref-user}

\begin{document}
\newpage
\pagenumbering{arabic} \setcounter{page}{1}

\begin{center}
    {\Large  Error Analysis of Tr-PINNs Algorithm for 2D Incompressible Navier-Stokes Equations with Non-Homogeneous Boundary Conditions 
    }\\[0.25in]
    Dongjie Liu\footnotemark[1] \quad
    Xuebo Li\footnotemark[2] \quad
    Rong Yang \footnotemark[1] 
    
    \vspace{0.1in}
    \footnotetext[1]{School of Mathematics, Statistics and Mechanics, Beijing University of Technology, 
    Ping Le Yuan 100, Chaoyang District, Beijing 100124, P.~R.~China. }
    \footnotetext[2]{School of Science, Chongqing University of Technology, Chongqing 400054, P.~R.~China.}
 \footnotetext { E-mail addresses: ldj2001x@163.com (D. Liu), xbl@cqut.edu.cn (X. Li), ysihan2010@163.com (R. Yang).}
\end{center}

\begin{abstract}
   Physics-informed neural networks (PINNs) have been widely applied to solve Navier-Stokes equations by enforcing outputs and gradients of deep models to satisfy target equations. However, conventional PINNs only constrain the boundary terms by means of the $L^2$-norm when addressing the equations with non-homogeneous boundary conditions. This single constraint strategy may  cause inaccurate boundary simulation, further resulting in the decline of prediction accuracy. To resolve this critical issue, this paper proposes an improved physics-informed neural network by correcting the error of the boundary value, which is called  Tr-PINNs.
Based on the results of nonhomogeneous Stokes problem, the algorithm error analysis of Tr-PINNs is established. The efficacy of the Tr-PINNs algorithm is demonstrated via numerical experiments, which further demonstrate that the Tr-PINNs algorithm achieves a remarkable improvement in computational accuracy.

\bigskip

    \noindent{\it \bf Keywords}:  Incompressible 2D  Navier-Stokes equations, Physics informed neural network, Algorithm error analysis\\
    \noindent {\it \bf Mathematical Subject Classification:} 35Q30, 35Q68, 68T07  

\end{abstract}

\section{Introduction}

\textbf{PINNs applications in PDEs}. Partial differential equations (PDEs) are a core mathematical tool for describing complex physical processes. These processes are widely studied in fields including continuum mechanics, fluid dynamics and quantum physics. The accurate solution of these equations is directly related to the reliability of scientific analysis and the rationality of engineering design. Thus, developing efficient and robust numerical methods to obtain reliable high-accuracy approximate solutions of PDEs is one of the core objectives of scientific computing. With the research objects evolving toward cross-scale, strong nonlinearity and high-dimensionality, the challenges faced by traditional numerical methods for PDEs (e.g., finite element method, finite difference method) have become increasingly prominent, such as mesh dependence and exponential growth in computational cost. However, the rapid development of deep learning has provided an important opportunity for innovative breakthroughs in the field of PDEs solving.  The deep integration of deep learning with PDEs solving is driven to become a cutting-edge trend in the current computational physics field.

Against this backdrop, the physics-informed deep learning paradigm of ``integrating deep learning with physical priors" has emerged, and PINNs serve as the core solution framework of this paradigm. Raissi et al. (2019) first proposed a systematic methodological framework for physics-informed neural networks (PINNs). By transforming PDEs governing equations, initial conditions and boundary conditions into soft constraints incorporated into the loss function, the neural network can fit observed data and comply with underlying physical laws simultaneously in the training phase. This enables the unified solution of both forward and inverse PDEs problems within the mesh-free computational paradigm. The PINNs framework not only fully leverages the superior modeling capability of deep learning, but also effectively addresses the mesh dependence and high-dimensional bottlenecks inherent to traditional PDEs numerical methods, thus establishing a foundational academic basis for PINNs within computational physics \cite{RPK2019}.
Shin et al. (2020) pioneered in providing rigorous mathematical proofs for the applicability of PINNs to PDEs problems, from the standpoints of convergence and generalization. This work thereby lays a foundational theoretical groundwork for the application of PINNs in solving PDEs \cite{SD2020}. Mishra and Molinaro (2023) established the generalization error upper bound for PINNs when approximating the solutions to the forward problems of PDEs. They  derived that the generalization error estimates are dependent on the training error and the number of training samples \cite{MM2023}. De Ryck and Mishra (2024) established a crucial foundation for the numerical exploration of PINNs and associated models within physics-informed machine learning. Their comprehensive work further furnished substantial theoretical underpinnings for the development of this discipline \cite {DM2024}. For more improved literature on PINNs solving other types of PDEs, we refer
to \cite{BML2025, CMWYK2021, KLML2025, KP2025, LMK2021, RHG2022, WHKCB2022,WLWML2024,WYP2022}.\\

\noindent \textbf{PINNs applications in Navier-Stokes equations}. In this paper, we consider the following  boundary value problem of 2D incompressible Navier-Stokes equations 
\begin{eqnarray}\label{eq1.1}
\begin{aligned}
  & \left\{
  \begin{array}{ll}
  \frac{\partial u}{\partial t}- \nu \Delta u + u \cdot \nabla u  + \nabla p = f,  &x\in\Omega,\quad t>0,  \\
     \nabla \cdot u=0,   &x\in\Omega,\quad t>0,\\
    u |_{t=0}=u_0,   &x\in\Omega, \\
     u  =g,   &x\in\partial\Omega, ~~ t>0,\\
      \end{array}
  \right.
  \end{aligned}
 \end{eqnarray}   
in a bounded domain $\Omega$ with locally Lipschitzian or $C^2$ boundary $\partial\Omega$, where  $u=(u_{1},u_{2})$ is the fluid velocity field, $p$ is the  fluid   pressure.
  $\nu>0$ denote the viscosity.
  
Subsequent studies have continued to deepen around the theoretical rigor, method adaptability and scenario extensibility of PINNs.  At the theoretical level, Jin et al. (2021) proposed ``NSFnets" providing an adaptive PINNs solution for such equations, which is a PINN framework specifically designed for solving incompressible Navier-Stokes equations \cite{JCLK2021}. Biswas et al. (2022) proposed a PINN framework with an artificial regularization term for the 2D incompressible Navier-Stokes equations with homogeneous boundary conditions, and performed rigorous error and stability analyses via the Hodge decomposition \cite{BTU2022}. Xiao et al. (2023) further combined PINNs with Reynolds-averaged Navier-Stokes equations to model the Rayleigh-Taylor turbulent mixing process \cite{XZYY2023}.
In \cite{TRMA2024}, Thakur et al. (2024) proposed a method for assigning relative weights to the mean squared loss terms in the objective function used for training PINNs.   2D and 3D Navier-Stokes equations were employed as illustrative examples.  In \cite{DJM2024}, De Ryck et al. (2024) derived rigorous error estimates for the approximate solutions of PINNs targeting the Navier-Stokes equations. They also quantified the correlation between network structure and solution accuracy. In  \cite{BFNP2025}, Botarelli et al. (2025) achieved the solution of Navier-Stokes equations in complex fluid dynamics scenarios by using PINNs. They expanded their application scope in practical complex flow fields.  Guo et al. (2025) also systematically summarized the relevant progress of PINNs in solving complex PDEs (including Navier-Stokes equations) and engineering applications \cite{GZYG2025}.  Khademi and Dufour (2025) improved the PINNs structure by using trainable sine activation functions for approximating the solutions of Navier-Stokes equations \cite{KD2025}.\\

\noindent \textbf{Conventional PINNs algorithm}. In the architecture of the PINNs, the neural network is structured as follows: it comprises an input layer dedicated to accommodating coordinate variables $(x, y)$, which is followed by $n$ hidden layers each consisting of $m$ neurons, activation and culminates in an output layer that yields predictive values for the two variables $(u_h,p_h)$. When the PINNs is sufficiently trained and the network has enough capacity, $(u_h,p_h)$
 is intended to approximate these true weak solutions $(u,p)$ for the Navier-Stokes problem. We define the pointwise residuals
 \begin{eqnarray}\label{eq.1.4}
\begin{aligned} 
  & \left\{
  \begin{array}{ll}
  \mathcal{R}_{pde}=\frac{\partial u_h}{\partial t} - \nu \Delta u_h + u_h \cdot \nabla u_h  + \nabla p_h - f,  &x\in\Omega,\quad t>0,  \\
  \mathcal{R}_{div}=   \nabla \cdot u_h, &x\in\Omega,\quad t>0,  \\
  \mathcal{R}_{init}=  u_h|_{t=0} - u_0,  &x\in\Omega,\\
  \mathcal{R}_{bdy}= u_h -g,   &x\in\partial\Omega,~~ t>0,\\
      \end{array}
  \right.
  \end{aligned}
 \end{eqnarray}  
where \(u_h: \Omega \to \mathbb{R}^2\) and $p_h: \Omega \to \mathbb{R}$ are constructed by minimizing the generalization error.
Referring to \cite{DJM2024, GZYG2025,MM2023, RPK2019}, the definition of the generalization error of conventional PINNs is as follows
\begin{align}
  \widetilde{\varepsilon}_G(h) = 
  \|\mathcal{R}_{pde}\|^2_{L^2(0,T;L^2(\Omega))}+\|\mathcal{R}_{init}\|^2_{L^2(\Omega)} + \|\mathcal{R}_{div}\|^2_{L^2(0,T;L^2(\Omega))}+\|\mathcal{R}_{bdy}\|^2_{L^2(0,T;{L^2(\partial \Omega))}}.
\end{align}

However, this raises a critical issue about the error analysis of PINNs algorithm: if $(u,p)$ is the solution to the partial differential equation \eqref{eq1.1}, given a PINNs $(u_h,p_h)$ with small generalization error, does there exist a suitable norm $\|\cdot\|$ and a nonnegative function $\delta(\cdot)$  satisfying $\delta(\varepsilon)\rightarrow 0$ as $\varepsilon\rightarrow 0$,  such that 
 \begin{eqnarray}\label{error analysis}
\|u_h-u\| \leq \delta(\widetilde{\varepsilon}_G(h))?
\end{eqnarray}
Next, we give an affirmative answer by correcting the generalization error.\\

\noindent \textbf{The Tr-PINNs algorithm error analysis}. The aforementioned studies have advanced the deep integration of deep learning and PDEs solving across three dimensions: theoretical research, methodological innovation, and practical application. Consequently, these efforts have established PINNs as one of the core cutting-edge directions in contemporary computational physics. However, in sharp contrast to the widespread application of PINNs, it is found that there are few papers that rigorously demonstrate the error analysis of the newly proposed PINNs algorithms for solving the Dirichlet boundary of Navier-Stokes equations. In practical engineering scenarios, due to objective factors such as irregular geometric boundaries, discontinuous material properties and sudden load changes, the solutions of PDEs may hardly meet the requirement of high smoothness (e.g., $H^2$ and higher-order smoothness). In fact, the solutions to a large number of engineering problems only belong to $H^1$. Based on the unique weak solution to \eqref{eq1.1}, this paper proposes the Tr-PINNs algorithm, which is a theoretical innovation. More precisely, compared with the conventional PINNs algorithm by using  $L^2$-norm to measure boundary residuals, this paper employs the trace theorem in mathematics to define the boundary residual by using $H^{\frac{1}{2}}$-norm. There are the following two advantages to doing this correction: 
\begin{enumerate}
	\item[1)] The error analysis can be conducted based on the Tr-PINNs algorithm, which verifies the theoretical rigor of the proposed algorithm. Compared with \cite{DJM2024},  the weak solution is considered in this paper instead of classical solution.

	\item[2)] In Section 4,  three numerical examples show that the Tr-PINNs algorithm significantly improves the computational accuracy.
	
\end{enumerate}

 Based on the trace norm, we find a neural network $(u_h,p_h)$ named as Tr-PINNs algorithm, for which all residuals are simultaneously minimized by minimizing the strengthened generalization error
\begin{align}\label{minque}
  \varepsilon_G(h) = 
  \|\mathcal{R}_{pde}\|^2_{L^2(0,T;L^2(\Omega))}+\|\mathcal{R}_{init}\|^2_{L^2(\Omega)} + \|\mathcal{R}_{div}\|^2_{H^1(0,T;L^2(\Omega))}+\|\mathcal{R}_{bdy}\|^2_{H^1(0,T;{H^\frac{1}{2}(\partial \Omega))}}.
\end{align}
Therefore, the boundary error is effectively optimized via the proposed method.

Then  the error estimates result can be establishes as follows:
 \begin{Theorem}\label{erroranay}
  Let $(u, p)\in  V\times  {L}^{2}$ be the unique weak solution of equation \eqref{eq1.1}. Suppose  \((u_h, p_h)\) is constructed via Tr-PINNs as the sufficiently trained numerical approximations of $(u, p)$. Then for all $T>0$,  there exists a constant $C =C(\Omega,T ,\nu)$  such that
  $$
  \|u_h-u\|^2_{L^\infty(0,T;L^2(\Omega))} \leqslant C \varepsilon_G(h). 
  $$
\end{Theorem}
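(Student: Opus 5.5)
Set $e=u_h-u$ and $q=p_h-p$. Subtracting \eqref{eq1.1} from the residual definitions \eqref{eq.1.4} gives
\begin{align*}
\partial_t e-\nu\Delta e+u_h\cdot\nabla u_h-u\cdot\nabla u+\nabla q&=\mathcal{R}_{pde}, & \nabla\cdot e&=\mathcal{R}_{div},\\
e|_{t=0}&=\mathcal{R}_{init}, & e|_{\partial\Omega}&=\mathcal{R}_{bdy}.
\end{align*}
We cannot test this system against $e$ directly. The two obstacles are that $e$ is neither divergence free nor zero on the boundary, so neither the pressure term nor the boundary flux term vanishes. The plan is to remove both defects with a lifting built from the nonhomogeneous Stokes (or divergence) problem that the paper says it relies on.

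\textbf{Step 1 (lifting).} For a.e.\ $t$, solve the stationary nonhomogeneous Stokes problem: find $w(t)$ and $\pi(t)$ with
\[
-\Delta w+\nabla\pi=0,\qquad \nabla\cdot w=\mathcal{R}_{div},\qquad w|_{\partial\Omega}=\mathcal{R}_{bdy}.
\]
Such a solution exists under the usual compatibility condition
\[
\int_\Omega \mathcal{R}_{div}\,dx=\int_{\partial\Omega}\mathcal{R}_{bdy}\cdot n\,ds .
\]
This condition holds automatically here: $\nabla\cdot u_h=\mathcal{R}_{div}$ and $u_h=g+\mathcal{R}_{bdy}$, while $\int_{\partial\Omega} g\cdot n=0$ because $\nabla\cdot u=0$. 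The lifting comes with the estimate
\[
\|w\|_{H^1(\Omega)}\le C\big(\|\mathcal{R}_{div}\|_{L^2(\Omega)}+\|\mathcal{R}_{bdy}\|_{H^{1/2}(\partial\Omega)}\big).
\]
The Stokes problem is linear and time-independent, so the same bound holds for $\partial_t w$ in terms of $\partial_t\mathcal{R}_{div}$ and $\partial_t\mathcal{R}_{bdy}$. This is exactly why $\varepsilon_G$ uses the $H^1(0,T;L^2)$ and $H^1(0,T;H^{1/2})$ norms. Integrating in time and using the embedding $H^1(0,T)\subset L^\infty(0,T)$ yields
\[
\|w\|_{L^\infty(0,T;H^1)}^2+\|\partial_t w\|_{L^2(0,T;H^1)}^2\le C\varepsilon_G(h).
\]

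\textbf{Step 2 (energy estimate for $v=e-w$).} The function $v$ is divergence free and vanishes on $\partial\Omega$, so it belongs to $V$. Testing the error equation with $v$ makes the pressure term $(\nabla q,v)$ disappear and leaves no boundary term. Writing $e=v+w$, we obtain
\[
\tfrac12\tfrac{d}{dt}\|v\|^2+\nu\|\nabla v\|^2=(\mathcal{R}_{pde},v)-(\partial_t w,v)-\nu(\nabla w,\nabla v)-(N,v),
\]
where
\[
N=u_h\cdot\nabla u_h-u\cdot\nabla u=e\cdot\nabla u+u\cdot\nabla e+e\cdot\nabla e .
\]
The linear terms are handled by Cauchy--Schwarz and Young's inequality, absorbing $\tfrac{\nu}{4}\|\nabla v\|^2$ into the left-hand side.

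\textbf{Step 3 (the main obstacle: the trilinear terms).} Because $e$ is not in $V$, the usual cancellation $b(\cdot,v,v)=0$ is only available for the divergence-free part $v$. We substitute $e=v+w$ throughout:
\begin{itemize}
\item The term $(v\cdot\nabla u,v)$ is controlled by Ladyzhenskaya's inequality $\|v\|_{L^4}^2\le C\|v\|\,\|\nabla v\|$. This gives $\tfrac{\nu}{8}\|\nabla v\|^2+C\|\nabla u\|^2\|v\|^2$, and $\|\nabla u\|^2\in L^1(0,T)$ since $u\in V$.
\item The term $(u\cdot\nabla v,v)$ vanishes since $u\in V$.
\item The term $(v\cdot\nabla v,v)$ vanishes since $v\in V$.
\item The remaining terms all contain $w$, such as $(w\cdot\nabla u,v)$, $(u\cdot\nabla w,v)$, $(w\cdot\nabla w,v)$, $(v\cdot\nabla w,v)$ and $(w\cdot\nabla v,v)$. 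Each is bounded using $L^4$ bounds on $w$ from its $H^1$ norm together with Ladyzhenskaya's inequality. The result has the form $C(1+\|\nabla u\|^2+\|w\|_{H^1}^2)\|v\|^2+C(1+\|\nabla u\|^2)\|w\|_{H^1}^2$.
\end{itemize}
I expect this step to be the delicate one. The constants must depend only on $\Omega,T,\nu$ and norms of the weak solution $u$, which are bounded in terms of the data. The $\|w\|_{H^1}$ factor multiplying $\|v\|^2$ is harmless because Step 1 gives $\|w\|_{L^\infty(0,T;H^1)}$ bounded, under the sufficiently-trained assumption that $\varepsilon_G(h)\le 1$.

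\textbf{Step 4 (conclusion).} We arrive at
\[
\tfrac{d}{dt}\|v\|^2+\nu\|\nabla v\|^2\le \alpha(t)\|v\|^2+\beta(t),
\]
with $\alpha\in L^1(0,T)$ and $\int_0^T\beta\le C\varepsilon_G(h)$. Gronwall's inequality then gives
\[
\sup_t\|v\|^2\le C\big(\|v(0)\|^2+\varepsilon_G(h)\big).
\]
For the initial term,
\[
\|v(0)\|\le\|\mathcal{R}_{init}\|+\|w(0)\|\le C\varepsilon_G(h)^{1/2}.
\]
Finally,
\[
\|e\|_{L^\infty(L^2)}^2\le 2\|v\|_{L^\infty(L^2)}^2+2\|w\|_{L^\infty(L^2)}^2\le C\varepsilon_G(h),
\]
which is the claim of Theorem \ref{erroranay}.
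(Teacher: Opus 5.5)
Your proposal is correct and follows essentially the same route as the paper. The paper's lifting $w_h-u$, obtained from the nonhomogeneous Stokes problem (Lemma \ref{lemma3}), is your $w$, and its divergence-free, boundary-vanishing remainder $\widetilde{u}_h$ is your $v$; the paper then tests with $\widetilde{u}_h$, bounds the trilinear terms using Ladyzhenskaya/Gagliardo--Nirenberg together with $H^1(0,T;H^1)\hookrightarrow C([0,T];H^1)$ for the lifting, applies Gronwall under the same smallness assumption on the residuals, and concludes with the triangle inequality. You also check the flux compatibility condition $\int_\Omega \mathcal{R}_{div}\,dx=\int_{\partial\Omega}\mathcal{R}_{bdy}\cdot n\,ds$ explicitly, which the paper leaves implicit, and, as in the paper, your constant actually depends on $\|u\|_{L^2(0,T;H^1)}$ (hence on the data) and not only on $\Omega,T,\nu$.
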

Therefore, the result means that the  solution becomes more accurate only when the generalization error is trained to be sufficiently small.\\

\noindent \textbf{The training error}. Since the quantity $\varepsilon_G(h)$  involves integrals, it
can not be directly minimized in practice. Thanks to the method justified by Lemma \ref{lemma6}, the integrals in \eqref{minque} are approximated by Monte Carlo Integration, resulting in
\begin{align}
  \varepsilon^{i_1}_T(h,S_i) &= \frac{|\Omega|  T}{N_i}\sum_{n=1}^{N_i}|\mathcal{R}_{pde}(t^i_n,x^i_n)|^2, \quad \varepsilon^{i_2}_T(h,S_i) = \frac{|\Omega| T}{N_i}\sum_{n=1}^{N_i}|\mathcal{R}_{div}(t^i_n,x^i_n)|^2, \notag\\
  \varepsilon^{init}_T(h,S_{init}) &= \frac{ |\Omega|  }{N_{init}}\sum_{n=1}^{N_{init}}|\mathcal{R}_{init}(0,x^{init}_{n})|^2,\quad   \varepsilon^{b_1}_T(h,S_{b_1}) = \frac{|\partial\Omega | T}{N_b}\sum_{n=1}^{N_b}|\mathcal{R}_{bdy}(t^b_n,x^b_n)|^2 ,\\
  \varepsilon^{b_2}_T(h,S_{b_2}) &= \frac{|\partial\Omega|^2  T}{N_b k}\sum_{i=1}^{N_b} \sum_{j \in \mathcal{N}_k(i)} \frac{|\mathcal{R}_{bdy}(t^b_i,x^b_i)-\mathcal{R}_{bdy}(t^b_i,x^b_j)|^2}{|x_i-x_j|^2},\notag
\end{align}
with quadrature points in space constituting data sets 
\begin{align}
S_i          &= \{(t^i_n,x^i_n)\}^{N_i}_{n=1}, \qquad \quad \quad \quad (t^i_n,x^i_n) \in [0,T] \times \Omega,\notag \\
S_{init}     &= \{(0,x^{init}_n)\}^{N_{init}}_{n=1}, \qquad \qquad \, \, x^{init}_n  \in \Omega, \\
S_{b_1}      &= \{(t^b_n,x^b_n)\}^{N_b}_{n=1}, \qquad \qquad \quad (t^b_n,x^b_n) \in [0,T] \times\partial\Omega, \notag\\
S_{b_2}      &= \{( t_i^b, x_i^b, x_j^b ) \}_{\substack{i=1 , j \in \mathcal{N}_k(i)}}^{N_b}, \quad (t^b_i,x^b_i,x^b_j) \in [0,T] \times\partial\Omega\times\partial\Omega, \notag
\end{align}
here \(j \in \mathcal{N}_k(i)\): denotes that sample $j$ belongs to the k-nearest neighbor set of sample $i$-here. Notice that the k-nearest neighbor set is defined to exclude the sample $i$ itself, which ensures that the denominator is non-zero.

According to generalization error defined by \eqref{minque},  the training error is denoted as
\begin{equation}
  \begin{split}
  \varepsilon_T(h) = & \varepsilon^{i_1}_T(h,S_i)+\varepsilon^{i_2}_T(h,S_i) +\varepsilon^{init}_T(h,S_{init})+\varepsilon^{b_1}_T(h,S_{b_1})+\varepsilon^{b_2}_T(h,S_{b_2}) \\
  &+\frac{\partial \varepsilon^{i_2}_T(h,S_i)}{\partial t}+\frac{\partial \varepsilon^{b_1}_T(h,S_{b_1})}{\partial t}  +\frac{\partial  \varepsilon^{b_2}_T(h,S_{b_2})}{\partial t}.
  \end{split}
\end{equation}
If the sample size is sufficiently large, this process has been proven by Lemma \ref{lemma6} that  the generalization error can be made sufficiently small once
the training error is reduced to an extremely small value. Therefore, the essence of PINNs lies in training the neural network parameters $h$ to minimize the training error. \\

\noindent \textbf{Organization of the paper}.  In Section 2,  several functional spaces and lemmas for subsequent use are presented. Specially, the wellposedness of the weak solution to \eqref{eq1.1} is established.
The result of error analysis of Tr-PINNs for system \eqref{eq1.1} is proved in Section 3. Three numerical
examples are presented in Section 4. The conclusions of this paper are presented in Section 5. Additionally, the proof of wellposedness for system \eqref{eq1.1} and some lemmas required for the preceding sections are  supplemented in Appendix A.

\section{Preliminary and main results}
 \setcounter{equation}{0}

We begin by introducing some spaces and one lemma, which are commonly used in the mathematical study of fluids. 
Let $\mathbb{L}^p(\Omega):=(L^p(\Omega))^2$, $\mathbb{H}^{1}(\Omega):=(H^1(\Omega))^2$ be the integral spaces and Sobolev spaces respectively, where $L^p(\Omega)$ and $H^1(\Omega)$ are  defined as usual.
Denoting 
$$\mathcal{V}:=\big\{u|~u\in \mathbb{H}^{1}(\Omega),\ \mbox{div} ~u=0 \big\},$$
$H$ is the closure of $\mathcal{V}$ in ${\mathbb{L}^2}(\Omega)$ topology, $\|\cdot\|$ and
$(\cdot,\cdot)$ denote the norm and inner product in $H$ respectively, where,  for $u, v\in {\mathbb{L}^2}(\Omega)$,
$$
(u,v)=\sum\limits_{\scriptstyle i=1}^2\int_{\Omega}u_i(x)v_i(x)dx,
$$
and $\|\cdot\|=\|\cdot\|_2$.
Let $V$ be the closure of the set $\mathcal{V}$ in $\mathbb{H}^{1}(\Omega)$
topology, $\|\cdot\|_{\mathbb{H}^{1}}$ and $((\cdot,\cdot))$ denote the norm and inner product in $V$ respectively.
Clearly, $V\hookrightarrow
H\equiv H'\hookrightarrow V'$, $H'$ and $V'$ are dual spaces of $H$ and $V$ respectively, where the injection is dense and
continuous. $\|\cdot\|_{*}$ and $\langle\cdot\rangle$ denote the norm in $V'$ and the dual product between $V$ and $V'$ respectively. 

For all $u, v, w\in \mathbb{H}^{1}(\Omega)$, the trilinear operator is defined by
$$
b(u,v,w)=((u \cdot \nabla )v,w)=\sum\limits_{\scriptstyle i,j=1}^2\int_\Omega u_i (D_i v_j) w_j dx
$$
where $D_i=\frac{\partial}{\partial x_i}$.
\begin{Lemma}\label{lemma2.1}
  The trilinear operators have the following relationship
  \begin{equation}
      \begin{gathered}
   b(u,v,w)=-b(u,w,v), \quad \forall u \in V\cap \mathbb{H}_0^{1}(\Omega),   v,w\in \mathbb{H}^{1}(\Omega),\\
 b(u,v,w)=-b(u,w,v), \quad \forall u \in  V,   v\in \mathbb{H}^{1}(\Omega), w\in \mathbb{H}_0^{1}(\Omega).
        \end{gathered}
  \end{equation}
   moreover, it is easy to verify that 
   \begin{eqnarray}
    b(u,v,v)=0, \quad \quad \forall u \in V\cap \mathbb{H}_0^{1}(\Omega),   v\in \mathbb{H}^{1}(\Omega).
   \end{eqnarray}
 \end{Lemma}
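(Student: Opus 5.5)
The statement to prove is Lemma \ref{lemma2.1}. The plan is to prove the antisymmetry identities first for smooth functions by integration by parts, then pass to the stated spaces by density. Here $b(u,v,w)$ is continuous on $\mathbb{H}^1(\Omega)^3$ in two dimensions, by H\"older's inequality and the embedding $H^1(\Omega)\hookrightarrow L^4(\Omega)$. This embedding holds on the bounded Lipschitz domain $\Omega$.

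For smooth $u,v,w$ (say in $C^\infty(\overline\Omega)$), write $u_i D_i v_j\, w_j = D_i(u_i v_j w_j) - (D_i u_i) v_j w_j - u_i v_j D_i w_j$ and integrate over $\Omega$. The divergence theorem then gives
\begin{equation*}
b(u,v,w) + b(u,w,v) = \int_{\partial\Omega} (u\cdot n)(v\cdot w)\, dS - \int_\Omega (\nabla\cdot u)(v\cdot w)\, dx .
\end{equation*}
By continuity of $b$ and of the trace map $H^1(\Omega)\to L^2(\partial\Omega)$, this identity extends to all $u,v,w\in\mathbb{H}^1(\Omega)$. The boundary integral is well defined because traces of $H^1$ functions lie in $H^{1/2}(\partial\Omega)\hookrightarrow L^4(\partial\Omega)$ in two dimensions, so the product of three traces is integrable. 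Alternatively, one can keep one factor in $H^1_0$ and argue directly by density.

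Now specialize. If $u\in V$, then $\nabla\cdot u=0$ a.e., since divergence is continuous from $\mathbb{H}^1$ to $L^2$ and vanishes on $\mathcal V$, hence on its closure; so the volume term drops. For the first identity, $u\in V\cap\mathbb{H}^1_0$ has zero trace, so $u\cdot n=0$ on $\partial\Omega$ and the boundary term vanishes. For the second, $w\in\mathbb{H}^1_0$ has zero trace, so $v\cdot w=0$ on $\partial\Omega$ and again the boundary term vanishes. In both cases $b(u,v,w)=-b(u,w,v)$. Taking $w=v$ in the first identity gives $b(u,v,v)=-b(u,v,v)$, hence $b(u,v,v)=0$.

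The only delicate step is justifying the density and continuity argument. This requires the $L^4$ bounds in the domain and on the boundary, together with density of $C^\infty(\overline\Omega)$ in $H^1(\Omega)$ for Lipschitz $\Omega$. Both are standard, so no real obstacle is expected.
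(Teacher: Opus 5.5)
Your proof is correct. The paper gives no proof of this lemma; it calls it easy to verify. Your argument is the standard one it takes for granted: derive $b(u,v,w)+b(u,w,v)=\int_{\partial\Omega}(u\cdot n)(v\cdot w)\,dS-\int_\Omega(\nabla\cdot u)(v\cdot w)\,dx$ for smooth fields, extend it to $\mathbb{H}^1(\Omega)$ by density using the two-dimensional $L^4$ bounds in $\Omega$ and on $\partial\Omega$, then kill the volume term via $\nabla\cdot u=0$ and the boundary term via the zero trace of $u$ or of $w$.

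Your general identity also gives directly, from the second relation, $b(u,v,v)=0$ for $u\in V$ and $v\in\mathbb{H}^1_0(\Omega)$. That is the version Section 3 actually needs for $b(u,\widetilde{u}_h,\widetilde{u}_h)=0$, since there $u$ does not vanish on $\partial\Omega$.
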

 
Next, the  wellposedness results of system \eqref{eq1.1} is stated in the following, which is proved  in the Appendix. 
\begin{Theorem}\label{Unique1.1}
  Let  $(f,g) \in L^2(0,T;V') \times H^1(0,T;\mathbb{H}^{\frac{1}{2}}(\partial\Omega))$ and $u_0 \in H$.
    Then there exists a unique weak solution $(u, p)$ to system \eqref{eq1.1} with
   \begin{eqnarray}\label{stability}
(u, p)\in  \big( L^2(0,T; V) \cap L^\infty(0,T;H) \big)\times L^2(0,T; {L}^{2}(\Omega)).
  \end{eqnarray}
  \end{Theorem}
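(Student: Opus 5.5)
The plan is the classical route for 2D Navier--Stokes with non-homogeneous Dirichlet data. First reduce to homogeneous boundary values by lifting the boundary datum, then use Faedo--Galerkin approximation, energy estimates, compactness, and recovery of the pressure via de Rham's theorem.

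\textbf{Step 1 (lifting).} Since $\Omega$ is bounded and Lipschitz, the trace operator has a bounded right inverse from $\mathbb{H}^{\frac12}(\partial\Omega)$ to $\mathbb{H}^1(\Omega)$. Assuming the compatibility condition $\int_{\partial\Omega} g\cdot n\,ds=0$, which is implicit in the problem, I would take the solenoidal lifting $G(t)\in \mathbb{H}^1(\Omega)$ with $\nabla\cdot G=0$ and $G|_{\partial\Omega}=g$. This is obtained by solving a stationary Stokes problem with data $g(t)$, so that $G\in H^1(0,T;\mathbb{H}^1(\Omega))$ and
\[
\|G\|_{H^1(0,T;\mathbb{H}^1)}\le C\|g\|_{H^1(0,T;\mathbb{H}^{\frac12}(\partial\Omega))}.
\]
The lifting must additionally satisfy Hopf's estimate: for any $\epsilon>0$ one can choose $G$ (cutoff near the boundary) with $|b(v,G,v)|\le \epsilon\|\nabla v\|^2$ for all $v\in V\cap\mathbb{H}^1_0$. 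Writing $u=w+G$, the unknown $w$ lies in $V_0:=V\cap \mathbb{H}^1_0$ and satisfies, for all $v\in V_0$,
\[
\langle w_t,v\rangle+\nu((w,v))+b(w,w,v)+b(w,G,v)+b(G,w,v)=\langle F,v\rangle,
\]
where $F=f-G_t+\nu\Delta G-(G\cdot\nabla)G\in L^2(0,T;V')$. The initial condition is $w(0)=u_0-G(0)$, which lies in $H$ because $u_0-G(0)$ is divergence-free; here one uses that $u_0$ and $G(0)$ share normal trace.

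\textbf{Step 2 (Galerkin and energy estimates).} Project onto the eigenfunctions of the Stokes operator in $V_0$ and solve the resulting ODE system locally. Test with $w_m$ and use Lemma \ref{lemma2.1}, which gives $b(w_m,w_m,w_m)=0$ and $b(G,w_m,w_m)=0$. The only dangerous term is $b(w_m,G,w_m)$, which Hopf's construction absorbs as $\le \frac{\nu}{4}\|\nabla w_m\|^2$. Gronwall's inequality then yields bounds for $w_m$, uniform in $m$, in $L^\infty(0,T;H)\cap L^2(0,T;V_0)$. In 2D, Ladyzhenskaya's inequality $\|v\|_{L^4}^2\le C\|v\|\|\nabla v\|$ gives $(w_m\cdot\nabla)w_m$ bounded in $L^2(0,T;V')$, hence $\partial_t w_m$ is bounded in $L^2(0,T;V_0')$.

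\textbf{Step 3 (passage to the limit, uniqueness, pressure).} The Aubin--Lions lemma gives strong convergence in $L^2(0,T;H)$, which suffices to pass to the limit in the trilinear terms. Since $w\in L^2(V_0)$ and $w_t\in L^2(V_0')$, we get $w\in C([0,T];H)$, so the initial datum makes sense. For uniqueness, take the difference $\delta$ of two solutions and test with $\delta$. The critical term $b(\delta,w_2,\delta)$ is controlled by $C\|\delta\|\|\nabla\delta\|\|\nabla w_2\|\le \frac{\nu}{2}\|\nabla\delta\|^2+C\|\nabla w_2\|^2\|\delta\|^2$, using Ladyzhenskaya again, and Gronwall with $\|\nabla w_2\|^2\in L^1(0,T)$ gives $\delta=0$. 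Finally, the functional $u_t-\nu\Delta u+u\cdot\nabla u-f$ vanishes on divergence-free test functions. De Rham's theorem then supplies $p$; the regularity $p\in L^2(0,T;L^2)$ requires controlling the time derivative in $H^{-1}$, which is done via the standard argument integrating in time and using the Ne\v{c}as inequality.

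\textbf{Main obstacle.} I expect the main obstacle to be the lifting step: producing a divergence-free extension with both the $H^1$-in-time regularity and the Hopf smallness property. Arbitrary boundary data does not admit a useful energy estimate otherwise. My first attempt would use the Hopf cutoff construction $G=\operatorname{curl}(\theta_\epsilon\psi)$, where $\psi$ is a stream function of a Stokes lifting. The difficulty is that this argument is standard only for $C^2$ or connected boundaries with zero flux through each component.
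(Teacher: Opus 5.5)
Your overall route matches the paper's. You lift $g(t)$ by a stationary Stokes problem, reduce to a homogeneous problem with extra linear terms, and finish with Faedo--Galerkin plus compactness; the paper's appendix is only a sketch of this, deferring to Lions. Making the compatibility conditions $\int_{\partial\Omega} g\cdot n\,ds=0$ and $u_0\cdot n=g(0)\cdot n$ explicit is a real improvement. The paper's $H$ carries no boundary condition, so its assertion $u_0-w(0)\in H$ is not the one the homogeneous problem needs.

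However, the Hopf extension you call the main obstacle is unnecessary for the evolutionary problem. Exactly as in your uniqueness step,
\[
|b(w_m,G,w_m)|\le C\|w_m\|\,\|\nabla w_m\|\,\|\nabla G\|\le \frac{\nu}{4}\|\nabla w_m\|^2+\frac{C}{\nu}\|\nabla G\|^2\|w_m\|^2,
\]
and $\|\nabla G\|^2\in L^1(0,T)$. So Gronwall closes the energy estimate with the plain Stokes lifting, which needs only the total-flux condition. As written, your argument instead depends on a lifting you cannot produce for Lipschitz or multiply connected domains. When individual boundary components carry nonzero flux, the Leray--Hopf inequality can genuinely fail. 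Drop that requirement rather than try to prove it.

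The genuine gap is the pressure. Integrating in time and applying de Rham's theorem with the Ne\v{c}as inequality yields only the primitive $\int_0^t p\,ds\in C([0,T];L^2(\Omega))$, i.e.\ $p\in W^{-1,\infty}(0,T;L^2(\Omega))$. That is all the standard weak theory gives.

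There is also a preliminary issue: de Rham needs $f$ extended from $L^2(0,T;V')$ to $L^2(0,T;\mathbb{H}^{-1})$, because an element of $V'$ does not act on non-solenoidal fields, and the pressure then depends on the extension. After extending, every term of $\nabla p=f+\nu\Delta u-(u\cdot\nabla)u-u_t$ except $u_t$ lies in $L^2(0,T;\mathbb{H}^{-1})$ in 2D. So $p\in L^2(0,T;L^2(\Omega))$ is equivalent to $u_t\in L^2(0,T;\mathbb{H}^{-1})$.

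Your Galerkin bounds control $w_t$ only in $L^2(0,T;V_0')$, the dual of the \emph{solenoidal} space, and for $u_0$ merely in $H$ this cannot be upgraded in general. Already for the linear Stokes problem, a tangential boundary layer in $u_0$ produces wall vorticity, hence pressure, that is not square integrable in time near $t=0$. The paper's sketch is silent on the pressure, so neither argument supports that part of the statement. You must either weaken it to $\int_0^t p\,ds\in C([0,T];L^2(\Omega))$ or add hypotheses on the data that yield $u_t\in L^2(0,T;\mathbb{H}^{-1})$.
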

\begin{Remark}
The nonhomogeneous boundary condition of $g\in H^1(0,T;\mathbb{H}^{\frac{1}{2}}(\partial\Omega))$ is not the best. Referring to \cite{R2007}, the regularity requirement of the  condition $g$ can be optimized by means of Helmholtz projection decomposition and semigroup theory.
\end{Remark}

\section{The error analysis of Tr-PINNs}
\setcounter{equation}{0} \ \ \ \
     Let $d=2$,   $\nu >0$ and $(u,p)\in \big( L^2(0,T;V) \cap L^\infty(0,T;H) \big)\times L^2(0,T;L^2(\Omega)) $ be the weak solution of Navier-Stokes equation \eqref{eq1.1}. If $(u_h, p_h)$ is a approximating solution constructed via the  Tr-PINNs with parameters vector $h$,     
     then the resulting $L^{2}$-error can be obtained as follows. In the next proof, the constants $C$ may be different, which just depend on $\Omega, T ,\nu$.
\par\vspace{8pt}
\noindent{\bf Proof of Theorem \ref{erroranay}.} Let $\hat{u}=u_{h}-u$ and $\hat{p}=p_{h}-p$ denote the difference between
the solution of the Navier-Stokes equations and the approximating solution. Combining the Navier-Stokes equations \eqref{eq1.1}
and pointwise residuals equations \eqref{eq.1.4},   a straightforward calculation shows that
\begin{equation}
\begin{cases}
\mathcal{R}_{pde}=\frac{\partial \hat{u}}{\partial t}-\nu\Delta \hat{u}+\hat{u}\cdot\nabla \hat{u}+u\cdot \nabla\hat{u}+\hat{u}\cdot\nabla u+\nabla \hat{p},&\forall x \in  \Omega \quad t>0,
\\
\mathcal{R}_{div}=\nabla\cdot\hat{u},\quad  &\forall x \in  \Omega \quad t>0,
\\
\mathcal{R}_{init}=\hat{u}(0), &\forall x \in  \Omega,
\\
\mathcal{R}_{bdy}=\hat{u}, &\forall x \in \partial \Omega ~~ t>0.
\end{cases}
\label{eq4.3}
\end{equation}

It follows from  Lemma  \ref{lemma3}   that there exists $w_{h}$ with
\begin{eqnarray}\label{3.2}
w_h - u=(u_h - u)-\widetilde{u}_h=\hat{u}-\widetilde{u}_h,
\end{eqnarray}
satisfying that
\begin{equation}
\begin{split} 
\nabla\cdot (w_h -u)&=\mathcal{R}_{div}\quad  \forall x \in  \Omega \quad t>0; \quad w_h - u=\mathcal{R}_{bdy} \quad \forall x \in \partial \Omega ~~ t>0,\\
&\text{and}\quad \|w_h -u\|_{H^1(\Omega)} \leq C (\| \mathcal{R}_{bdy}\|_{H^\frac{1}{2}(\partial\Omega)}+\|\mathcal{R}_{div}\|_{L^2(\Omega)}),
\label{stokesest}
\end{split}
\end{equation}
then it can be obtained that $(u_h-u)|_{\partial\Omega}= (w_h-u)|_{\partial\Omega}$ and 
\begin{eqnarray}\label{defre}
\nabla\cdot \widetilde{u}_h=0 \quad \forall x \in  \Omega \quad t>0; \quad \quad  \widetilde{u}_h|_{\partial\Omega}=0.
\end{eqnarray}

The following equation is obtained by taking the inner product of the first equation of (\ref{eq4.3}) with $\widetilde{u}_h $
\begin{equation}
\begin{split}\label{3.4}
\langle \widetilde{u}_h ,\mathcal{R}_{pde}\rangle=
&\langle \widetilde{u}_h ,\frac{\partial \hat{u}}{\partial t}\rangle -\nu\langle \widetilde{u}_h ,\Delta\hat{u}\rangle +b(\hat{u},\hat{u},\widetilde{u}_h )  \\
&+b(u,\hat{u},\widetilde{u}_h )+b(\hat{u},u,\widetilde{u}_h )+\langle \widetilde{u}_h ,\nabla \hat{p}\rangle.
\end{split}
\end{equation}
By substituting \eqref{3.2} into \eqref{3.4} and simplifying appropriately, the following is obtained
\begin{equation}
\begin{split}\label{3.5}
\langle\widetilde{u}_h ,\mathcal{R}_{pde}\rangle=
& \langle\widetilde{u}_h ,\frac{\partial \widetilde{u}_h }{\partial t}\rangle +\langle \widetilde{u}_h ,\frac{\partial (w_h-u)}{\partial t}\rangle -\nu \langle \widetilde{u}_h ,\Delta \widetilde{u}_h \rangle -\nu\langle \widetilde{u}_h ,\Delta (w_h-u)\rangle \\
& +b(\widetilde{u}_h ,\widetilde{u}_h ,\widetilde{u}_h )+b((w_h-u),(w_h-u),\widetilde{u}_h )+b((w_h-u),\widetilde{u}_h ,\widetilde{u}_h )\\
& +b(\widetilde{u}_h ,(w_h-u),\widetilde{u}_h )+b(\widetilde{u}_h ,u,\widetilde{u}_h )+b(u,\widetilde{u}_h ,\widetilde{u}_h )\\
& +b(u,(w_h-u),\widetilde{u}_h )+b((w_h-u),u,\widetilde{u}_h )+\langle\widetilde{u}_h ,\nabla \hat{p}\rangle.
\end{split}
\end{equation} 
In what follows, each term in the equation above is processed term by term. Firstly, by \eqref{stokesest}, the time derivative terms can be dealt by
\begin{equation}\label{3.6}
\begin{split}
\langle\widetilde{u}_h ,\frac{\partial \widetilde{u}_h }{\partial t}\rangle
& =\frac{1}{2}\frac{d}{d t}\|\widetilde{u}_h  \|^2_{L^2(\Omega)},\\
\langle\widetilde{u}_h ,\frac{\partial (w_h-u)}{\partial t}\rangle
&  \leq  \epsilon \| \widetilde{u}_h \|^2_{L^2(\Omega)} + \frac{1}{4\epsilon} \|\frac{\partial( w_h-u)}{\partial t}\|^2_{L^2(\Omega)}\\
& \leq  \epsilon \| \widetilde{u}_h \|^2_{L^2(\Omega)} + \frac{C}{4\epsilon} (\|\frac{\partial \mathcal{R}_{bdy}}{\partial t}\|^2_{H^\frac{1}{2}(\partial \Omega)}+\|\frac{\partial \mathcal{R}_{div}} {\partial t}\|^2_{L^2(\Omega)}).
\end{split}
\end{equation}
According to Green's formula, the viscous diffusion terms can be estimated by
\begin{equation}
\begin{split}
\nu\langle\widetilde{u}_h ,\Delta \widetilde{u}_h \rangle
&=-\nu\|\nabla \widetilde{u}_h \|^2_{L^2(\Omega)},\\
|\nu\langle\widetilde{u}_h ,\Delta (w_h-u)\rangle|
&= |-\nu\langle\nabla \widetilde{u}_h ,\nabla (w_h-u)\rangle| \\
& \leq\nu \|\nabla \widetilde{u}_h \|_{L^2(\Omega)}\|\nabla (w_h-u)\|_{L^2(\Omega)}  \\
& \leq \nu\epsilon \|\nabla \widetilde{u}_h \|^2_{L^2(\Omega)}+\frac{1}{4\epsilon}\|\nabla (w_h-u)\|^2_{L^2(\Omega)} \\
& \leq \nu\epsilon \|\nabla \widetilde{u}_h \|^2_{L^2(\Omega)}+\frac{C}{4\epsilon}(\| \mathcal{R}_{bdy}\|^2_{H^\frac{1}{2}(\partial\Omega)}+\|\mathcal{R}_{div}\|^2_{L^2(\Omega)}) .
\end{split}
\end{equation} 
By Lemma \ref{lemma2.1}, it can be obtained that
\begin{equation}
|b(\widetilde{u}_h ,\widetilde{u}_h ,\widetilde{u}_h )|=0,\quad  |b(u,\widetilde{u}_h ,\widetilde{u}_h )|=0.
\end{equation} 
By means of Hölder's inequality, Embedding theorem and  Gagliardo-Nirenberg inequality, the following terms are bounded as
\begin{subequations}
\begin{flalign}
 |b((w_h-u),(w_h-u),\widetilde{u}_h )|
&\leq \| \widetilde{u}_h \|_{L^4 (\Omega)}\|\nabla (w_h-u)\|_{L^2(\Omega)} \|w_h-u\|_{L^4(\Omega)} \notag\\
&\leq \epsilon \| \nabla \widetilde{u}_h \|^2_{L^2 (\Omega)}+\frac{C}{4\epsilon}\sup\limits_{t\in[0,T]}\|w_h-u\|^4_{H^1(\Omega)} \notag\\
&\leq   \epsilon \| \nabla \widetilde{u}_h \|^2_{L^2 (\Omega)}+\frac{C}{4\epsilon} \|w_h-u\|^4_{H^1(0,T,H^1(\Omega))} \\
& \leq \epsilon \| \nabla \widetilde{u}_h \|^2_{L^2 (\Omega)}+\frac{C}{4\epsilon} (\|\mathcal{R}_{bdy}\|^4_{H^1(0,T,H^{\frac{1}{2}}(\Omega))}+\|\mathcal{R}_{div}\|^4_{H^1(0,T,L^2(\Omega))}), \notag\\
  |b((w_h-u),\widetilde{u}_h ,\widetilde{u}_h )|
& \leq \|w_h-u\|_{L^4(\Omega)}\|\nabla \widetilde{u}_h\|_{L^2(\Omega)}\|\widetilde{u}_h\|_{L^4(\Omega)} \notag\\
&\leq  C\|w_h-u\|_{H^1(\Omega)}\|\nabla \widetilde{u}_h \|^{\frac{3}{2}}_{L^2 (\Omega)} \| \widetilde{u}_h \|^{\frac{1}{2}}_{L^2 (\Omega)} \notag\\
&\leq \epsilon \|\nabla \widetilde{u}_h \|^2_{L^2 (\Omega)} + \frac{C}{4\epsilon} (\sup\limits_{t\in[0,T]}\|w_h-u\|^4_{H^1(\Omega)}) \| \widetilde{u}_h \|^2_{L^2 (\Omega)}\\
&\leq \epsilon \|\nabla \widetilde{u}_h \|^2_{L^2 (\Omega)} + \frac{C}{4\epsilon} (\|\mathcal{R}_{bdy}\|^4_{H^1(0,T,H^{\frac{1}{2}}(\Omega))}+\|\mathcal{R}_{div}\|^4_{H^1(0,T,L^2(\Omega))}) \| \widetilde{u}_h \|^2_{L^2 (\Omega)} , \notag\\
|b(\widetilde{u}_h ,(w_h-u),\widetilde{u}_h )|
&\leq \|\nabla (w_h-u)\|_{L^2(\Omega)} \| \widetilde{u}_h \|^2_{L^4 (\Omega)}\notag \\
&\leq C\| w_h-u\|_{H^1(\Omega)}\|\nabla \widetilde{u}_h \|_{L^2 (\Omega)} \| \widetilde{u}_h \|_{L^2 (\Omega)}\\
&\leq \epsilon  \|\nabla \widetilde{u}_h \|^2_{L^2 (\Omega)}  + \frac{C}{4\epsilon}(\| \mathcal{R}_{bdy}\|^2_{H^\frac{1}{2}(\partial\Omega)}+\|\mathcal{R}_{div}\|^2_{L^2(\Omega)})\| \widetilde{u}_h \|^2_{L^2 (\Omega)}, \notag \\
|b(\widetilde{u}_h ,u,\widetilde{u}_h )|
&\leq \| \widetilde{u}_h \|^2_{L^4 (\Omega)}\|\nabla u\|_{L^2(\Omega)}\notag\\
&\leq C\| \widetilde{u}_h \|_{L^2 (\Omega)}\|\nabla \widetilde{u}_h \|_{L^2 (\Omega)}\|\nabla u\|_{L^2(\Omega)}  \\
&\leq \epsilon \|\nabla \widetilde{u}_h \|^2_{L^2 (\Omega)} + \frac{C}{4 \epsilon} \| u\|^2_{H^1(\Omega)}\| \widetilde{u}_h \|^2_{L^2 (\Omega)} , \notag\\
| b(u,(w_h-u),\widetilde{u}_h )|
& \leq \| u\|_{L^4 (\Omega)}\|\nabla (w_h-u)\|_{L^2(\Omega)} \|\widetilde{u}_h \|_{L^4(\Omega)} \\
& \leq C (\| \mathcal{R}_{bdy}\|_{H^\frac{1}{2}(\partial\Omega)}+\|\mathcal{R}_{div}\|_{L^2(\Omega)}) \|u\|^{\frac{1}{2}}_{H^1(\Omega)}\| \widetilde{u}_h \|^{\frac{1}{2}}_{L^2 (\Omega)} \|\nabla \widetilde{u}_h \|^{\frac{1}{2}}_{L^2 (\Omega)} \notag \\
& \leq \epsilon  \|u\|_{H^1(\Omega)} \| \widetilde{u}_h \|_{L^2 (\Omega)} \|\nabla \widetilde{u}_h \|_{L^2 (\Omega)} +\frac{C}{4\epsilon} (\| \mathcal{R}_{bdy}\|^2_{H^\frac{1}{2}(\partial\Omega)}+\|\mathcal{R}_{div}\|^2_{L^2(\Omega)})\notag\\
& \leq \epsilon  \|\nabla \widetilde{u}_h \|^2_{L^2 (\Omega)} + \frac{1}{4} \|u\|^2_{H^1(\Omega)} \| \widetilde{u}_h \|^2_{L^2 (\Omega)}   +\frac{C}{4\epsilon}(\| \mathcal{R}_{bdy}\|^2_{H^\frac{1}{2}(\partial\Omega)}+\|\mathcal{R}_{div}\|^2_{L^2(\Omega)}). \notag
\end{flalign}
\end{subequations}
here the embedding theorem $(w_h - u) \in H^1(0,T;H^1(\Omega))\hookrightarrow C([0,T];H^1(\Omega))$ is used.

With respect to $|b((w_h-u),u,\widetilde{u}_h )|= |\langle \nabla \cdot (w_h - u) , u \cdot \widetilde{u}_h \rangle  + b((w_h-u),\widetilde{u}_h,u)|$, it is processed by means of the solution space
\begin{flalign}
  |\langle \nabla \cdot (w_h - u) , u \cdot \widetilde{u}_h \rangle|
&\leq \|\mathcal{R}_{div}\|_{L^2(\Omega)}\|u\|_{L^4(\Omega)} \|\widetilde{u}_h\|_{L^4(\Omega)} \notag\\
&\leq C \|\mathcal{R}_{div}\|_{L^2(\Omega)}  \|u\|^{\frac{1}{2}}_{L^2(\Omega)}  \|u\|^{\frac{1}{2}}_{H^1(\Omega)}\| \widetilde{u}_h \|^{\frac{1}{2}}_{L^2 (\Omega)} \|\nabla \widetilde{u}_h \|^{\frac{1}{2}}_{L^2 (\Omega)}\notag\\
& \leq C \|\mathcal{R}_{div}\|_{L^2(\Omega)}  \|u\|^{\frac{1}{2}}_{H^1(\Omega)}\| \widetilde{u}_h \|^{\frac{1}{2}}_{L^2 (\Omega)} \|\nabla \widetilde{u}_h \|^{\frac{1}{2}}_{L^2 (\Omega)} \notag\\
& \leq \epsilon  \|u\|_{H^1(\Omega)} \| \widetilde{u}_h \|_{L^2 (\Omega)} \|\nabla \widetilde{u}_h \|_{L^2 (\Omega)} +\frac{C}{4\epsilon} \|\mathcal{R}_{div}\|^2_{L^2(\Omega)} \\
& \leq \epsilon  \|\nabla \widetilde{u}_h \|^2_{L^2 (\Omega)} + \frac{1}{4} \|u\|^2_{H^1(\Omega)} \| \widetilde{u}_h \|^2_{L^2 (\Omega)}   +\frac{C}{4\epsilon} \|\mathcal{R}_{div}\|^2_{L^2(\Omega)}  ,\notag\\
|b((w_h-u),\widetilde{u}_h,u)| 
&\leq C\|w_h - u\|_{L^4(\Omega)}\| \nabla \widetilde{u}_h \|_{L^2 (\Omega)}\| u\|_{L^4(\Omega)}    \notag\\
& \leq C \| \nabla \widetilde{u}_h \|_{L^2 (\Omega)} \|w_h - u\|^{\frac{1}{2}}_{L^2(\Omega)}\|w_h - u\|^{\frac{1}{2}}_{H^1(\Omega)} \|u\|^{\frac{1}{2}}_{L^2(\Omega)}\|u\|^{\frac{1}{2}}_{H^1(\Omega)}   \notag\\
& \leq \epsilon  \| \nabla \widetilde{u}_h \|^2_{L^2 (\Omega)} +\frac{C}{4\epsilon} \|w_h - u\|_{L^2(\Omega)}\|w_h - u\|_{H^1(\Omega)}\|u\|_{H^1(\Omega)}  \notag\\
& \leq \epsilon \| \nabla \widetilde{u}_h \|^2_{L^2 (\Omega)}  + \frac{1 }{4}\|u\|^2_{H^1(\Omega)} \sup\limits_{t\in[0,T]}\|w_h - u\|^2_{L^2(\Omega)}\\
&\quad + \frac{C }{4\epsilon}(\| \mathcal{R}_{bdy}\|^2_{H^\frac{1}{2}(\partial\Omega)}+\|\mathcal{R}_{div}\|^2_{L^2(\Omega)})\notag \\
& \leq \epsilon \| \nabla \widetilde{u}_h \|^2_{L^2 (\Omega)}  + \frac{C }{4\epsilon}(\| \mathcal{R}_{bdy}\|^2_{H^\frac{1}{2}(\partial\Omega)}+\|\mathcal{R}_{div}\|^2_{L^2(\Omega)})\notag \\
&\quad +\frac{C }{4}\|u\|^2_{H^1(\Omega)}(\|\mathcal{R}_{bdy}\|^2_{H^1(0,T;H^\frac{1}{2}(\partial \Omega))}+\|\mathcal{R}_{div}\|^2_{H^1(0,T;L^2( \Omega))}),\notag \\
|b((w_h-u),u,\widetilde{u}_h )| 
&\leq |\langle \nabla \cdot (w_h - u) , u \cdot \widetilde{u}_h \rangle| + | b((w_h-u),\widetilde{u}_h,u)| \notag\\
& \leq 2\epsilon \| \nabla \widetilde{u}_h \|^2_{L^2 (\Omega)}  + \frac{C }{4\epsilon}(\| \mathcal{R}_{bdy}\|^2_{H^\frac{1}{2}(\partial\Omega)}+\|\mathcal{R}_{div}\|^2_{L^2(\Omega)} )\\
&\quad + \frac{C }{4}\|u\|^2_{H^1(\Omega)}(\|\widetilde{u}_h\|^2_{L^2(\Omega)} +\|\mathcal{R}_{bdy}\|^2_{H^1(0,T;H^\frac{1}{2}(\partial \Omega))}+\|\mathcal{R}_{div}\|^2_{H^1(0,T;L^2( \Omega))}) \notag .
\end{flalign}
Recalling \eqref{defre}, the coupling pressure term is estimated by means of the Divergence theorem
\begin{equation}
\begin{split}
|\langle\widetilde{u}_h ,\nabla\hat{p} \rangle|
&=|\langle \nabla\cdot \widetilde{u}_h ,\hat{p} \rangle| = 0.
\end{split}
\end{equation} 
For the last remaining term, we can directly bound it by Hölder's inequality
\begin{equation}\label{3.15}
\begin{split}
|\langle \widetilde{u}_h ,\mathcal{R}_{pde} \rangle|
&\leq \| \widetilde{u}_h \|_{L^2 (\Omega)}\|\mathcal{R}_{pde} \|_{L^2(\Omega)}\\
&\leq \epsilon\|\widetilde{u}_h \|^2_{L^2 (\Omega)}+\frac{1}{4\epsilon}\| \mathcal{R}_{pde}\|^2_{L^2(\Omega)}.
\end{split}
\end{equation}
Now by combining  \eqref{3.5}-\eqref{3.15} and  choosing an appropriate $\epsilon$, we obtain that
\begin{flalign}\label{3.16}
      &\frac{d}{d t}\|\widetilde{u}_h \|^2_{L^2(\Omega)}+\nu\|\nabla \widetilde{u}_h \|^2_{L^2(\Omega)}  \leq C \Big(1+ \|\mathcal{R}_{bdy}\|^4_{H^1(0,T,H^{\frac{1}{2}}(\Omega))}+\|\mathcal{R}_{div}\|^4_{H^1(0,T,L^2(\Omega))} + \| \mathcal{R}_{bdy}\|^2_{H^\frac{1}{2}(\partial\Omega)} \notag \\
      & +\|\mathcal{R}_{div}\|^2_{L^2(\Omega)} + \|u\|^2_{H^1(\Omega)}\Big) \|\widetilde{u}_h\|^2_{L^2(\Omega)}  + C\|u\|^2_{H^1(\Omega)}(\|\mathcal{R}_{bdy}\|^2_{H^1(0,T;H^\frac{1}{2}(\partial \Omega))}+\|\mathcal{R}_{div}\|^2_{H^1(0,T;L^2( \Omega))})\notag \\
      & + C(\| \mathcal{R}_{bdy}\|^2_{H^\frac{1}{2} (\partial\Omega)}+ \|\frac{\partial \mathcal{R}_{bdy}}{\partial t}\|^2_{H^\frac{1}{2}(\partial \Omega)}  +\| \mathcal{R}_{div}\|^2_{L^2 (\Omega)}+ \|\frac{\partial \mathcal{R}_{div}}{\partial t}\|^2_{L^2( \Omega)}+ \| \mathcal{R}_{pde}\|^2_{L^2(\Omega)}) \notag \\
      & + C(\|\mathcal{R}_{bdy}\|^4_{H^1(0,T,H^{\frac{1}{2}}(\Omega))} + \|\mathcal{R}_{div}\|^4_{H^1(0,T,L^2(\Omega))}) .
\end{flalign}
 Since the residual term is an infinitesimal value if the model converges, without loss of generality, assume that
\begin{equation}\notag
  \|\mathcal{R}_{bdy}\|^4_{H^1(0,T,H^{\frac{1}{2}}(\Omega))}\leq \|\mathcal{R}_{bdy}\|^2_{H^1(0,T;H^\frac{1}{2}(\partial \Omega))}<1, \, \|\mathcal{R}_{div}\|^4_{H^1(0,T,L^2(\Omega))} \leq \|\mathcal{R}_{div}\|^2_{H^1(0,T;L^2( \Omega))}<1.
\end{equation}
If $\nu >0$, by Gronwall's inequality and $u\in L^2(0,T;V) \cap L^\infty(0,T;H) $, it is obtained that 
\begin{equation}
\begin{split}\label{3.17}
  \|\widetilde{u}_h (t)\|^2_{L^2(\Omega)} &\leq e^{C(1+t)} \big( \|\widetilde{u}_h (0)\|^2_{L^2(\Omega)} +\| \mathcal{R}_{pde}\|^2_{L^2(0,T;L^2(\Omega))}+\| \mathcal{R}_{div}\|^2_{H^1(0,T;L^2(\Omega))} \\
  &\quad + \| \mathcal{R}_{bdy}\|^2_{H^1(0,T;H^\frac{1}{2}(\partial \Omega))} \big), \quad \forall t \in [0,T].
\end{split}
\end{equation}
For the initial velocity,  one has
\begin{equation}
\begin{split}\label{3.18}
  \|\widetilde{u}_h (0)\|^2_{L^2(\Omega)}=
    &\|\hat{u}(0) -(w_h-u)(0)\|^2_{L^2(\Omega)}\\
    &\leq\|\hat{u}(0)\|^2_{L^2(\Omega)}+\sup\limits_{t\in[0,T]}\|(w_h-u)(t)\|^2_{L^2(\Omega)}\\
    &\leq \|\mathcal{R}_{init}\|^2_{L^2(\Omega)} + C( \|\mathcal{R}_{div}\|^2_{H^1(0,T;L^2(\Omega))}+\|\mathcal{R}_{bdy}\|^2_{H^1(0,T;{H^\frac{1}{2}(\partial \Omega))}}).
\end{split}
\end{equation}
Substituting \eqref{3.18} into \eqref{3.17}, we obtain
\begin{align}
\|\widetilde{u}_h (t)\|^2_{L^\infty(0,T;L^2(\Omega))}
&\leq C (\|\mathcal{R}_{pde}\|^2_{L^2(0,T;L^2(\Omega))}+\|\mathcal{R}_{init}\|^2_{L^2(\Omega)} \notag\\
&\quad + \|\mathcal{R}_{div}\|^2_{H^1(0,T;L^2(\Omega))}+\|\mathcal{R}_{bdy}\|^2_{H^1(0,T;{H^\frac{1}{2}(\partial \Omega))}}).
\end{align}
Finally, it is derived that
\begin{align}
\| u_h - u \|^2_{L^\infty(0,T;L^2(\Omega))}
& \leq \| \widetilde{u}_h\|^2_{L^\infty (0,T;L^2(\Omega))} + \sup\limits_{t\in[0,T]}\| w_h - u \|^2_{L^2(\Omega)} \notag \\
&\leq C (\|\mathcal{R}_{pde}\|^2_{L^2(0,T;L^2(\Omega))}+\|\mathcal{R}_{init}\|^2_{L^2(\Omega)} \notag\\
&\quad + \|\mathcal{R}_{div}\|^2_{H^1(0,T;L^2(\Omega))}+\|\mathcal{R}_{bdy}\|^2_{H^1(0,T;{H^\frac{1}{2}(\partial \Omega))}}),
\end{align}
which finishes the proof.
\hfill $\square$

\section{Numerical example}
\setcounter{equation}{0} 
Serving as a bridge between basic and applied research in fluid mechanics, the two-dimensional unsteady Navier-Stokes flow manifests its academic significance across three core dimensions: theoretical validation, methodological innovation and interdisciplinary empowerment. It has become an indispensable research paradigm within the discipline.

As a simplified testbed for investigating the nonlinear and unsteady features inherent to the Navier-Stokes equations, the two-dimensional Navier-Stokes flow mitigates the computational burden through its two-dimensional constraint, thereby establishing itself as a pivotal benchmark for assessing the stability, accuracy and convergence properties of numerical methods. 
Furthermore, considering that analytical solutions for such flows are rarely available, we construct a validation set consisting of three distinct test cases to evaluate the accuracy improvement of the Tr-PINNs algorithm. Specifically, two of these cases are widely used Taylor-Green vortex variants commonly employed in PINNs research (see e.g. \cite{DJM2024,LMK2021}), with the third case being independently selected for additional validation.\begin{Definition}\label{5.1}
Let \(R > 0\) be a bounded range for parameters, \(L \in \mathbb{N}\) denote the number of network layers,  \(l_0, l_1, \dots, l_L \in \mathbb{N}\) denote the widths of each layer (where \(l_0\) is the input dimension and \(l_L\) is the output dimension) and \(\sigma: \mathbb{R} \to \mathbb{R}\) be a twice-differentiable activation function (e.g., Tanh, Swish, SiLU, etc.).
For each layer index \(k \in \{1, 2, \dots, L\}\), define the local parameter space for the k-th layer as:
\begin{eqnarray}
\Theta_k = \left\{ \left( W_k, b_k \right) \ \bigg| \ W_k \in \mathbb{R}^{l_k \times l_{k-1}}, \ b_k \in \mathbb{R}^{l_k}, \ \text{and} \ \forall \, i,j, \ |(W_k)_{ij}| \leq R, \ \forall \, i, \ |(b_k)_i| \leq R \right\},
\end{eqnarray}
here \(W_k \in \mathbb{R}^{l_k \times l_{k-1}}\) is the weight matrix of the k-th layer, mapping from dimension \(l_{k-1}\) to \(l_k\),\\
\(b_k \in \mathbb{R}^{l_k}\) is the bias vector of the k-th layer.\\
The global parameter space \(\Theta\) is then the Cartesian product of these layer-wise local spaces:
\begin{eqnarray}
\Theta = \Theta_1 \times \Theta_2 \times \dots \times \Theta_L.
\end{eqnarray}
For any \(h \in \Theta\), define the linear transformation \(A_k^h: \mathbb{R}^{l_{k-1}} \to \mathbb{R}^{l_k}\) for the k-th layer as:
\begin{eqnarray}
A^{h}_k(z)=W_k z +b_k, \, \ \forall z \in \mathbb{R}^{l_{k-1}}.
\end{eqnarray}
Then, define the  transformation with activation$(\sigma)$ \(F_k^h: \mathbb{R}^{l_{k-1}} \to \mathbb{R}^{l_k}\) for the k-th layer as:
\begin{eqnarray}
F_k^h(z) = 
\begin{cases} 
W_k z + b_k & \text{if } k = L, \\
\sigma(W_k z + b_k) & \text{if } 1 \leq k < L .
\end{cases}
\end{eqnarray}
We define the neural network \(u_h: \mathbb{R}^{l_0} \to \mathbb{R}^{l_L}\)  as a composite function of the transformations across all layers:
\begin{eqnarray}
u_h(z) = \left( F_L^h \circ F_{L-1}^h \circ \dots \circ F_1^h \right)(z), \quad \forall z \in \mathbb{R}^{l_0}.
\end{eqnarray}
In the problem of approximating the Navier-Stokes equations, we take the input dimension \(l_0 = d\) and \(z = x\), \(u_h\) is used to approximate physical quantities (e.g., velocity, temperature, pressure) of the flow-thermal field and is referred to as the neural network realized by parameter \(h\).
\end{Definition}

\subsection{Network architecture and infrastructure}

Identical network architecture is guaranteed across all the cases presented below, where the methods adopted comprise Tr-PINNs, PINNs and CFD.

In this study, Tr-PINNs are compared with OpenFOAM simulations and conventional PINNs. For comparison fairness, a 64×64 grid is adopted in OpenFOAM, and the sampling configuration of the PINNs model is set to match the same grid scale as the CFD simulation, with a reasonable number of training iterations performed.

According to Definition \ref{5.1}, we parameterize $u_1(x,y,t,\nu)$, $u_2(x,y,t,\nu)$ and the pressure function $p(x,y,t,\nu)$ via a $6\times256$ fully connected neural network (6 hidden layers with 256 neurons per layer), and adopt the infinitely differentiable SiLU activation function for all network layers. Specifically, the input layer comprises 4 neurons corresponding to the input variables $(x, y, t,\nu)$; the hidden layers consist of a total of $6\times256$ neurons with the aforementioned SiLU activation; the output layer has 3 neurons, which represent the predicted values $u_{1h}(x,y,t,\nu)$, $u_{2h}(x,y,t,\nu)$ and $P_h(x,y,t,\nu)$, respectively.
Furthermore, we employ the Adam optimizer with an initial learning rate of 0.001 to update the network parameters $h$, and integrate this optimizer with the CosineAnnealingLR scheduler to dynamically adjust the learning rate throughout the training process.

The numerical simulations were performed on a workstation with the following hardware and software specifications:
\begin{itemize}
\item \textbf{CPU}: AMD Ryzen 9 8945HX with Radeon Graphics
\item \textbf{GPU}: NVIDIA GeForce RTX 5060 Laptop GPU
\item \textbf{Software}: Python 3.13.3, OpenFOAM v2312
\end{itemize}

\subsection{Case 1: variant 1 of the Taylor-Green vortex}
For the first validation case, we consider the Navier-Stokes equations \eqref{eq1.1} posed on the spatial domain $[0,\pi] \times [0,\pi]$, where the time horizon is $t\in [0,1]$ and the viscosity coefficient is set to $\nu \in [0.001,1]$. It can be inferred from $Re = \frac{\pi}{\nu}$ that $Re \in [3.14,3140]$. Let $f=0$, given the corresponding initial and boundary conditions as follows:
\begin{equation}
\begin{cases}
\begin{aligned}
u_1(0,y,t,\nu) &= -e^{-2 \nu t}\sin y, & u_2(0,y,t,\nu) &= 0,
\\
u_1(\pi,y,t,\nu) &= e^{-2 \nu t}\sin y, & u_2(\pi,y,t,\nu) &= 0,
\\
u_1(x,0,t,\nu) &= 0, & u_2(x,0,t,\nu) &= e^{-2 \nu t}\sin x,
\\
u_1(x,\pi,t,\nu) &= 0, & u_2(x,\pi,t,\nu) &= -e^{-2 \nu t}\sin x,
\\
u_1(x,y,0,\nu) &= -\cos x\sin y, & u_2(x,y,0,\nu) &= \sin x\cos y,
\end{aligned}
\end{cases}
\end{equation}
then the analytical solutions for the velocity and pressure of this test case are known, namely,
\begin{equation}
\begin{cases}
\begin{aligned}
u^*_1(x,y,t,\nu) &= -e^{-2 \nu t}\cos x \sin y,
\\ 
u^*_2(x,y,t,\nu) &= e^{-2 \nu t}\sin x \cos y,
\\
P^*(x,y,t,\nu) &= -\frac{1}{4}e^{-4 \nu t}(\cos 2x + \cos 2y).
\end{aligned}
\end{cases}
\end{equation}

Under strictly identical training environments, the comparison of $L^2$ relative errors of $u_1$ and $u_2$ among three algorithms across different Reynolds number (Re) scenarios at the operating condition of $t = 0.5$ is presented in Fig. \ref{fig4.1}. It is clearly observed that the aforementioned three algorithms exhibit extraordinary capability in simulating target solutions. Meanwhile, the $L^2$ relative errors of Tr-PINNs are found to be significantly lower than those of conventional PINNs across all Re numbers and significantly superior to the error levels of high-precision CFD simulations. Specifically, Tr-PINNs achieve an average error reduction of 54.58\% for $u_1$ and 49.82\% for $u_2$ compared with conventional PINNs; when compared with high-precision CFD simulations, the average error reduction further reaches 84.33\% for $u_1$ and 79.26\% for $u_2$.

Consistent with the preceding $L^2$ relative error analysis, the root mean square error (RMSE) of $u_1$ and $u_2$ for the three algorithms is further assessed under identical training environments across diverse Re scenarios at $t = 0.5$ in Fig. \ref{fig4.2}. While conventional PINNs demonstrate reasonable accuracy in capturing the target flow fields, the RMSE values of Tr-PINNs are consistently lower than those of conventional PINNs across all tested Re cases, and even surpass high-precision CFD simulations in terms of prediction precision. Specifically, for $u_1$, Tr-PINNs reduces the maximum RMSE by up to 66.88\% relative to conventional PINNs and by nearly 89.19\% compared with CFD; for $u_2$, the corresponding maximum reductions are approximately 69.59\% and 85.66\%, respectively. These RMSE results not only align with the $L^2$ error findings but also further corroborate that Tr-PINNs delivers superior numerical accuracy against both conventional PINNs and CFD, reinforcing its reliability and precision for simulating diverse operating conditions.

To better show its advantages, we select the visualization results at the operating condition of $t=0.5$, $\text{Re}=3141$ ($\nu=0.001$) to elaborate on the advantages of Tr-PINNs in detail with reference to Fig. \ref{fig4.3}.

As observed from panels (a)(b), (e)(f) and (i)(j) of Fig. \ref{fig4.3}, Tr-PINNs, PINNs and CFD methods exhibit excellent numerical fitting capabilities. For a fair comparison, we adopt the maximum absolute errors of $u_1$ and $u_2$ predicted by PINNs as the baseline threshold, and mark the regions where the absolute errors of Tr-PINNs and CFD exceed this threshold in yellow.

By analyzing the point-wise absolute error contours in (c)(d), (g)(h) and (k)(l) of Fig. \ref{fig4.3}, we find that the CFD method shows significant limitations in boundary simulation, with a singular point of maximum absolute error reaching 0.012453; in contrast, Tr-PINNs maintains high-precision simulation performance across the entire computational domain.

As a classical benchmark method in fluid mechanics, CFD serves as a reliable reference for validating model performance. To this end, we further plot the point-wise error contours between Tr-PINNs predictions and CFD results ((o)(p) of Fig. \ref{fig4.3}). To eliminate the interference of the singular point in CFD, we restrict the error range to $[-0.001, 0.001]$ and mark points outside this range in yellow. The results clearly demonstrate that Tr-PINNs is nearly identical to CFD, which strongly verifies the reliability of the Tr-PINNs method.

In summary, although the training time of Tr-PINNs is approximately 94 minutes, that of the PINN is 43 minutes, and the average training time of CFD is 102 seconds. Once the model is trained, the inference time of Tr-PINNs is merely on the millisecond scale. We acknowledge the indispensable role of CFD in fluid mechanics; nevertheless, the inference capability of Tr-PINNs will be fully demonstrated under multi-operating conditions, which greatly facilitates our subsequent research and workflow.
Moreover, through these aforementioned quantifiable error reductions, we unequivocally demonstrate that Tr-PINNs achieve substantially higher accuracy than PINNs and CFD. More importantly, such consistent error reduction across a range of $\text{Re}$ confirms that Tr-PINNs not only mitigate the inherent precision limitations of conventional PINNs but also outperform the accuracy benchmark of high-precision CFD. This endows them with robust and reliable numerical simulation capabilities for complex flow problems---an essential advance for engineering applications that demand high computational prediction fidelity.
\begin{figure}[htbp]
  \centering
  \renewcommand{\thefigure}{4.1} 
  \includegraphics[width=0.85\textwidth, keepaspectratio]{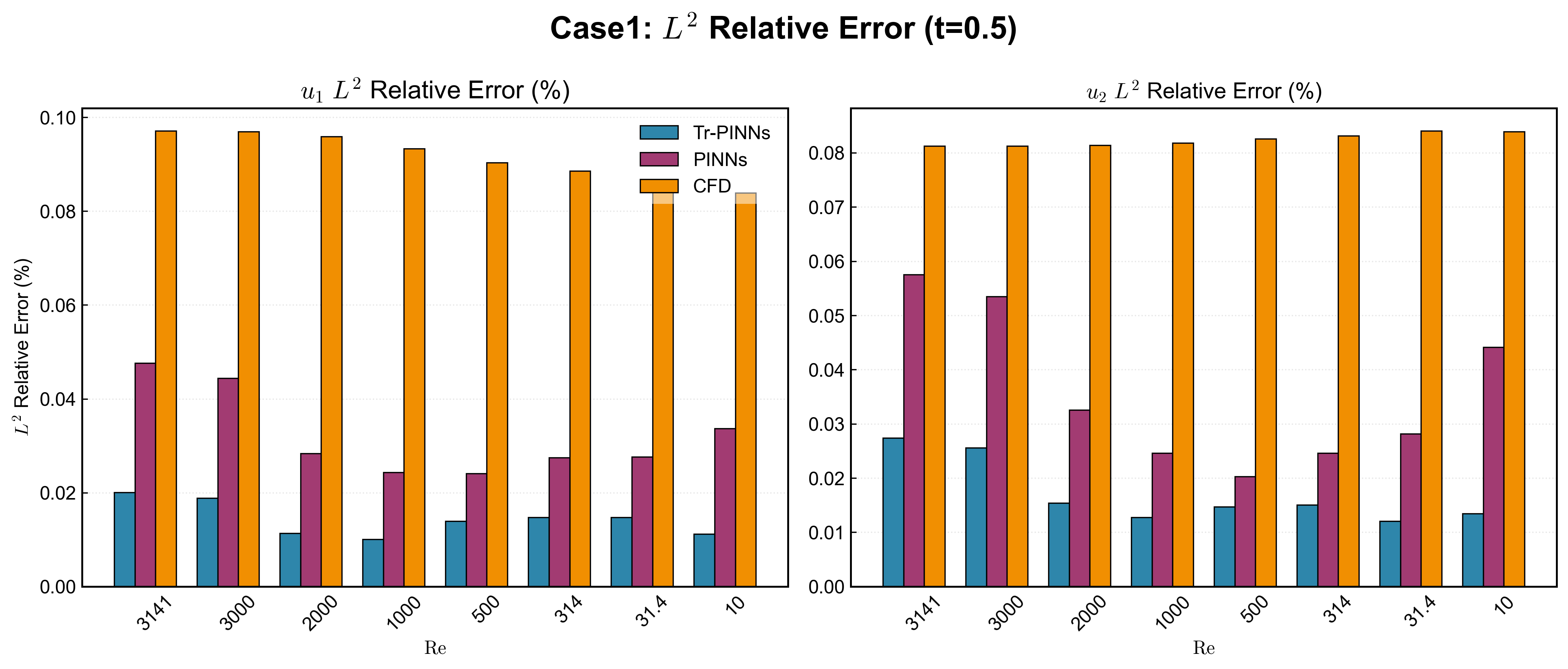}
  \caption{The first case: $L^2$ Relative Error of $u_1$ and $u_2$ under different working conditions at $t=0.5$.}
  \label{fig4.1}
\end{figure}

\begin{figure}[htbp]
  \centering
  \renewcommand{\thefigure}{4.2} 
  \includegraphics[width=0.85\textwidth, keepaspectratio]{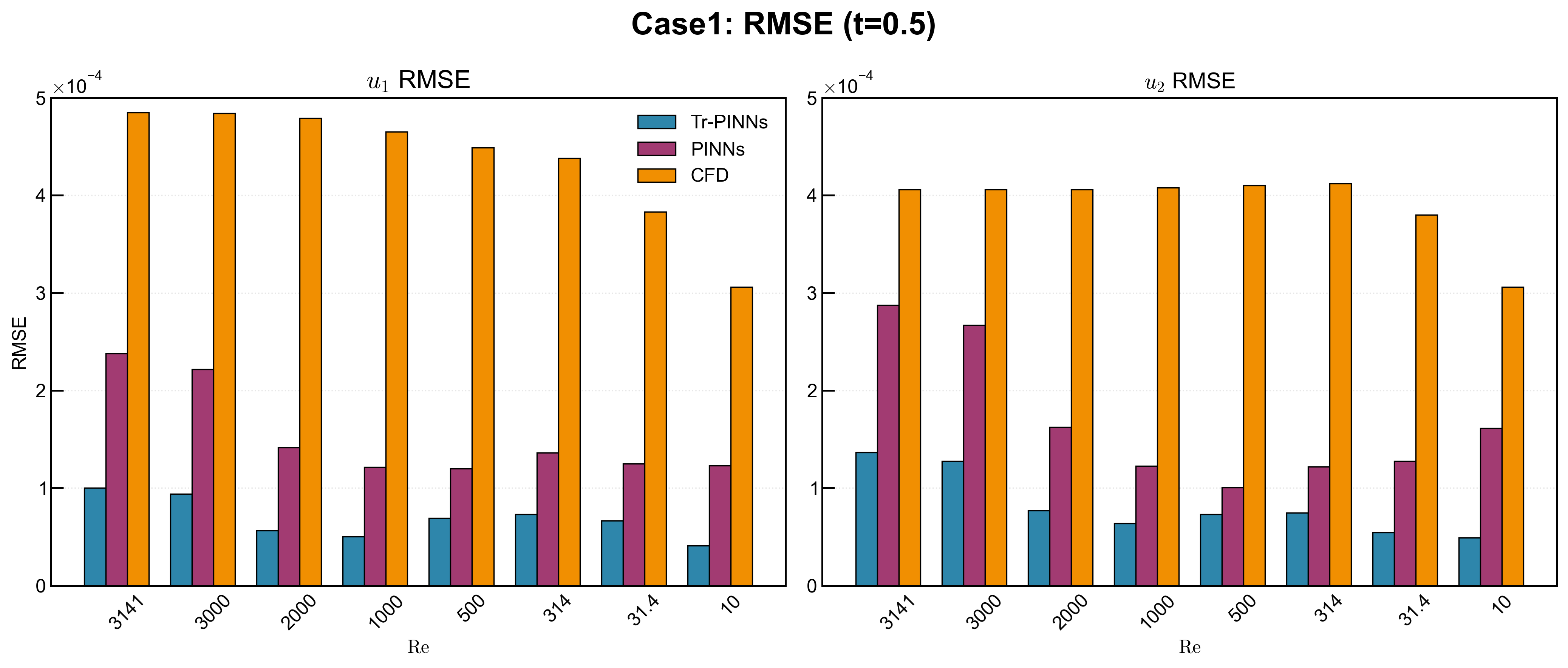}
  \caption{The first case: Root Mean Square Error of $u_1$ and $u_2$ under different working conditions at $t=0.5$.}
  \label{fig4.2}
\end{figure}

\begin{figure}[H]
  \centering
  \renewcommand{\thefigure}{4.3} 
  \includegraphics[width=1\textwidth, keepaspectratio]{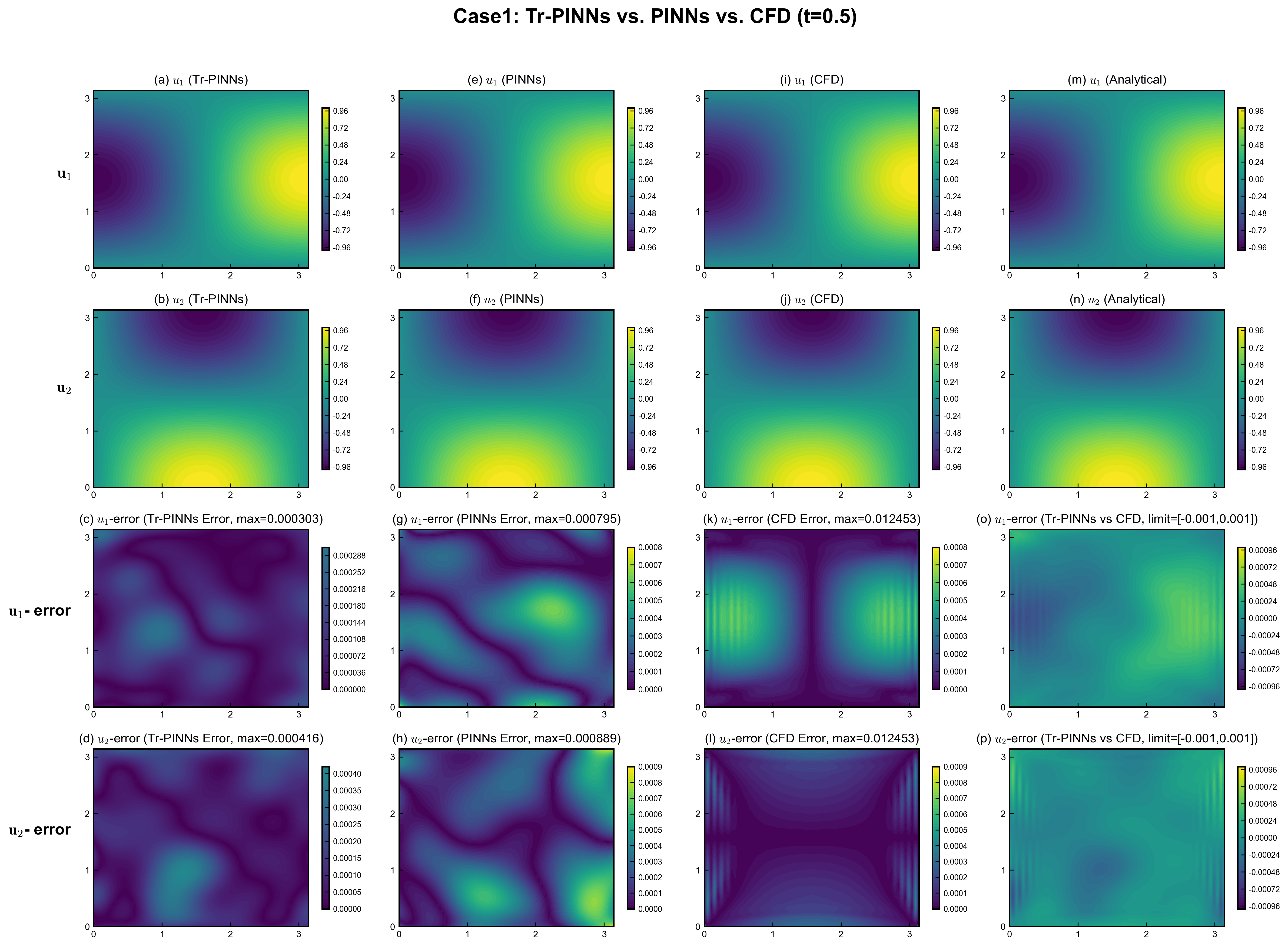}
  \caption{The first case: (a)(b) and (c)(d) show the predicted solutions and point-wise absolute errors of the Tr-PINNs algorithm; (e)(f) and (g)(h) show the predicted solutions and point-wise absolute errors of the PINNs algorithm; (i)(j) and (k)(l) show the numerical simulation results and point-wise absolute errors of OpenFOAM; (m)(n) denote the analytical solutions; (o)(p) show the point-wise errors between the Tr-PINNs predictions and OpenFOAM results.}
  \label{fig4.3}
\end{figure}

\subsection{Case 2: variant 2 of the Taylor-Green vortex}

For the second validation case, we consider the Navier-Stokes equations \eqref{eq1.1} posed on the spatial domain $[0,\pi] \times [0,\pi]$, where the time horizon is $t\in [0,1]$ and the viscosity coefficient is set to $\nu \in [0.001,1]$. It can be inferred from $Re = \frac{\pi}{\nu}$ that $Re \in [3.14,3140]$. Let $f=0$, given the corresponding initial and boundary conditions as follows:
\begin{equation}
\begin{cases}
\begin{aligned}
u_1(0,y,t,\nu) &= 0, & u_1(\pi,y,t,\nu) &= 0,
\\
u_2(0,y,t,\nu) &= e^{-2\nu t} cosy , & u_2(\pi,y,t,\nu) &= -e^{-2\nu t}cosy ,
\\
u_1(x,0,t,\nu) &= 0, & u_1(x,\pi,t,\nu) &= 0,
\\
u_2(x,0,t,\nu) &= e^{-2\nu t} cosx , & u_2(x,\pi,t,\nu) &= -e^{-2\nu t} cosx ,
\\
u_1(x,y,0,\nu) &= sinx siny , & u_2(x,y,0,\nu) &= cosx cosy,
\end{aligned}
\end{cases}
\end{equation}
then the analytical solution is presented by
\begin{equation}
\begin{cases}
\begin{aligned}
u^*_1(x,y,t,\nu) &= e^{-2\nu t} sinx siny,
\\ 
u^*_2(x,y,t,\nu) &= e^{-2\nu t} cosx cosy,
\\
P^*(x,y,t,\nu) &= \frac{1}{4}e^{-4\nu t}(cos2x -cos2y).
\end{aligned}
\end{cases}
\end{equation}

For the second test case in this paper, the employed neural network architecture, training configurations and CFD settings are identical to those in the first test case.

Under strictly identical training environments, the comparison of $L^2$ relative errors of $u_1$ and $u_2$ among three algorithms across different Re scenarios at the operating condition of $t = 0.5$ is presented in Fig. \ref{fig4.4}. The $L^2$ relative errors of Tr-PINNs are found to be significantly lower than those of conventional PINNs across all Re numbers and distinctly superior to the error levels associated with high-precision CFD simulations. Specifically, average error reductions of 31.99\% for $u_1$ and 36.91\% for $u_2$ are achieved by Tr-PINNs relative to conventional PINNs, and relative to high-precision CFD simulations, further average error reductions of 57.91\% for $u_1$ and 46.58\% for $u_2$ are obtained.

In line with the aforementioned analysis of $L^2$ relative errors, the RMSE of $u_1$ and $u_2$ for the three algorithms is further evaluated under identical training conditions across a range of Re scenarios at $t = 0.5$, as depicted in Fig. \ref{fig4.5}. The RMSE values derived from Tr-PINNs are persistently lower than those of conventional PINNs across all tested Re cases, and the predictive accuracy of Tr-PINNs even outperforms that of high-precision CFD simulations. Specifically, for $u_1$, Tr-PINNs achieves the maximal RMSE reduction of as high as 71.53\% relative to conventional PINNs and approximately 71.75\% in comparison with CFD simulations; for $u_2$, the maximal RMSE reduction relative to conventional PINNs is around 51.21\%, while that relative to CFD simulations reaches 64.01\%.

To better show its advantages, we select the visualization results at the operating condition of $t=0.5$, $\text{Re}=3141$ ($\nu=0.001$) to elaborate on the advantages of Tr-PINNs in detail with reference to Fig. \ref{fig4.6}.

As observed from panels (a)(b), (e)(f) and (i)(j) of Fig. \ref{fig4.6}, Tr-PINNs, PINNs and CFD alike possess exceptional ability to fit numerical data with high accuracy. For a fair comparison, we adopt the maximum absolute errors of $u_1$ and $u_2$ predicted by PINNs as the baseline threshold, and mark the regions where the absolute errors of Tr-PINNs and CFD exceed this threshold in yellow.

By analyzing the point-wise absolute error contours in (c)(d), (g)(h), and (k)(l) of Fig. \ref{fig4.6}, It can be clearly observed that under this operating condition, the maximum absolute error of $u_1$ for Tr-PINNs is reduced by 35.6\% relative to conventional PINNs and by approximately 43.60\% relative to CFD simulations; for $u_2$, the maximum absolute error of Tr-PINNs is decreased by 46.57\% compared with conventional PINNs and by around 40.36\% compared with CFD simulations. Meanwhile, the comparative analysis of the figures demonstrates the superior performance of Tr-PINNs in boundary fitting.

From the comparison between (o)  and (p) of Fig. \ref{fig4.6}, it is evident that the yellow regions almost entirely coincide with the zones of large errors in the CFD simulations. In contrast, the discrepancy between Tr-PINNs and CFD is close to zero in most regions, which further corroborates the superior fitting capability of Tr-PINNs in high-gradient and high-error regions.

To summarize, consistent with the aforementioned accuracy analyses, Tr-PINNs exhibit distinct superiority over both conventional PINNs and high-precision CFD simulations in terms of computational accuracy. Admittedly, Tr-PINNs require a longer training time. However, once well-trained, Tr-PINNs only consume millisecond-scale time for inference. While CFD remains a fundamental and indispensable tool in fluid dynamics research, the remarkable inference efficiency of Tr-PINNs greatly optimizes the efficiency of our subsequent research processes.
\begin{figure}[htbp]
  \centering
  \renewcommand{\thefigure}{4.4} 
  \includegraphics[width=0.85\textwidth, keepaspectratio]{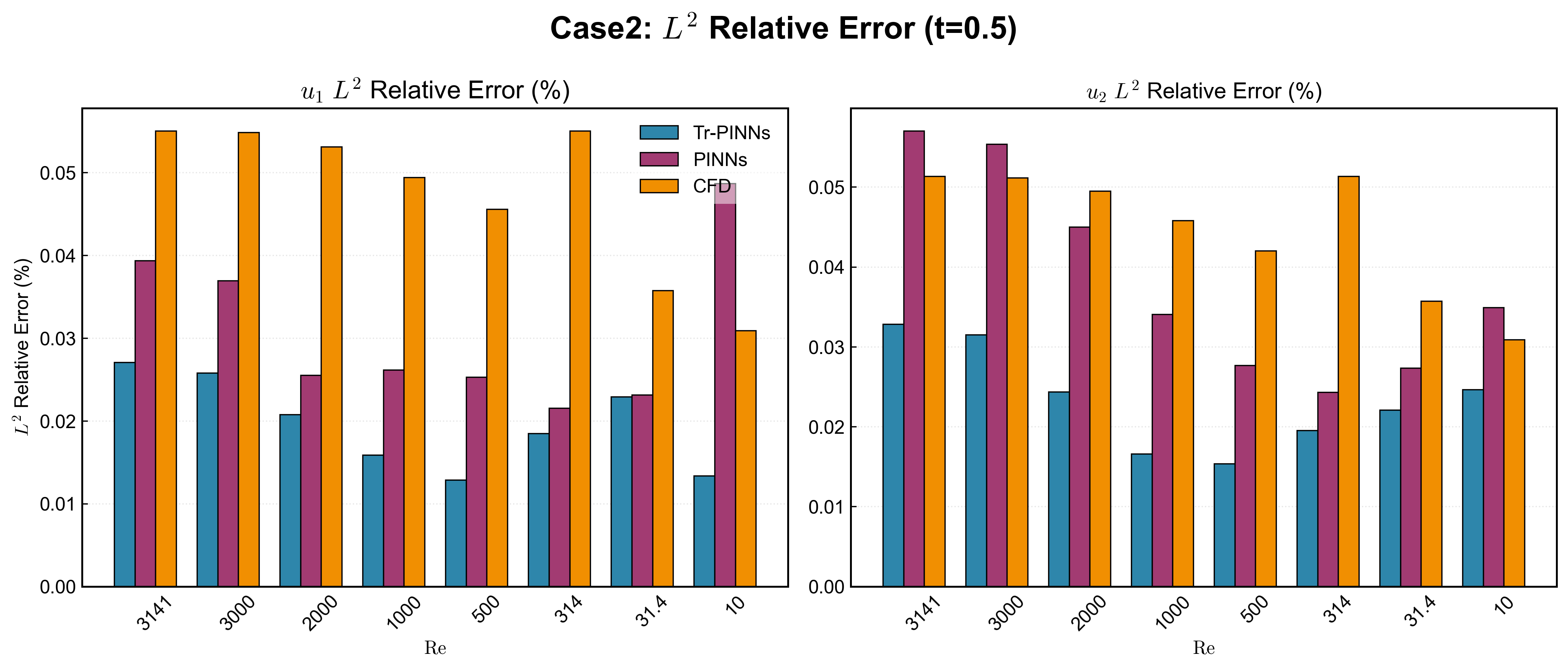}
  \caption{The second case: $L^2$ Relative Error of $u_1$ and $u_2$ under different working conditions at $t=0.5$.}
  \label{fig4.4}
\end{figure}

\begin{figure}[htbp]
  \centering
  \renewcommand{\thefigure}{4.5} 
  \includegraphics[width=0.85\textwidth, keepaspectratio]{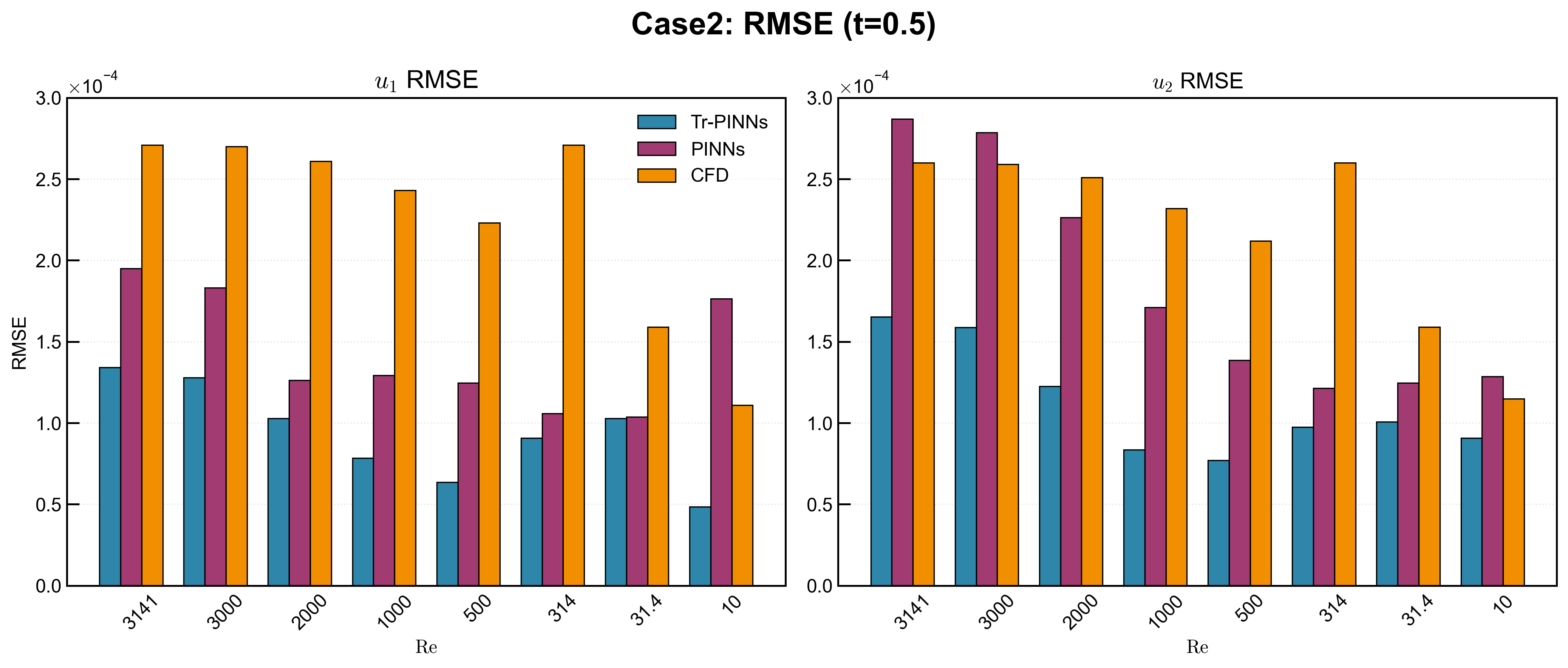}
  \caption{The second case: Root Mean Square Error of $u_1$ and $u_2$ under different working conditions at $t=0.5$.}
  \label{fig4.5}
\end{figure}

\begin{figure}[H]
  \centering
  \renewcommand{\thefigure}{4.6} 
  \includegraphics[width=1\textwidth, keepaspectratio]{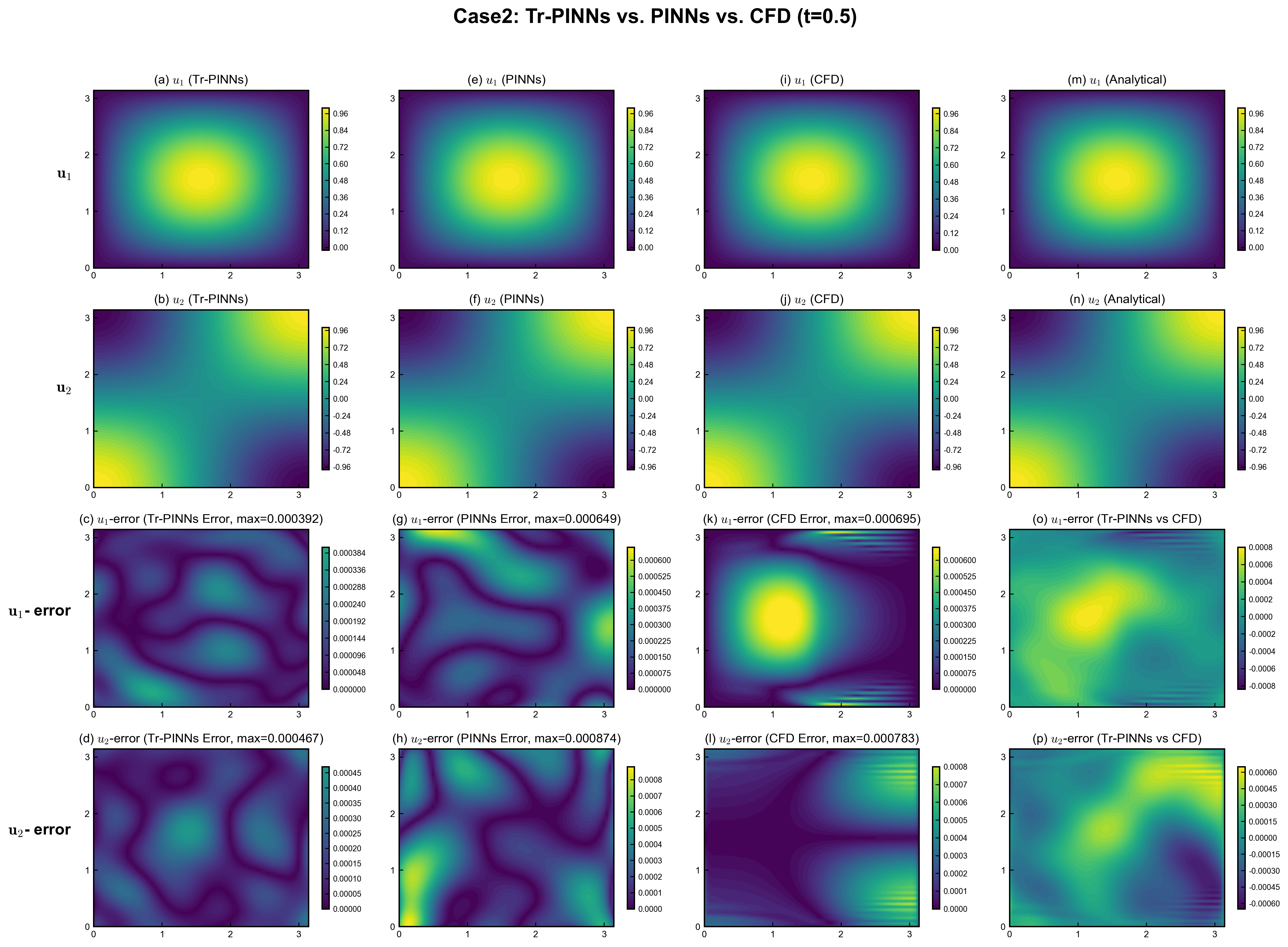}
  \caption{The second case: (a)(b) and (c)(d) show the predicted solutions and point-wise absolute errors of the Tr-PINNs algorithm; (e)(f) and (g)(h) show the predicted solutions and point-wise absolute errors of the PINNs algorithm; (i)(j) and (k)(l) show the numerical simulation results and point-wise absolute errors of OpenFOAM; (m)(n) denote the analytical solutions; (o)(p) show the point-wise errors between the Tr-PINNs predictions and OpenFOAM results.}
  \label{fig4.6}
\end{figure}

\subsection{Case 3: supplementary test case}
We further consider a numerical example to verify the improved accuracy of our Tr-PINNs algorithm. For the third validation case, we consider the Navier-Stokes equations \eqref{eq1.1} posed on the spatial domain $[0,\pi] \times [0,\pi]$, where the time horizon is $t\in [0,1]$ and the viscosity coefficient is set to $\nu \in [0.001,1]$. It can be inferred from $Re = \frac{\pi}{\nu}$ that $Re \in [3.14,3140]$. Let $f=0$, given the corresponding initial and boundary conditions as follows:
\begin{equation}
\begin{cases}
\begin{aligned}
u_1(0,y,t,\nu) &= e^{-5\nu t}\sin2y, & u_1(\pi,y,t,\nu) &= -e^{-5\nu t}\sin2y,
\\
u_2(0,y,t,\nu) &= -\frac{1}{2}e^{-5\nu t}\sin2y, & u_2(\pi,y,t,\nu) &= \frac{1}{2}e^{-5\nu t}\sin2y,
\\
u_1(x,0,t,\nu) &= e^{-5\nu t}\sin x, & u_1(x,\pi,t,\nu) &= e^{-5\nu t}\sin x,
\\
u_2(x,0,t,\nu) &= -\frac{1}{2}e^{-5\nu t}\sin x, & u_2(x,\pi,t,\nu) &= -\frac{1}{2}e^{-5\nu t}\sin x,
\\
u_1(x,y,0,\nu) &= \sin(x+2y), & u_2(x,y,0,\nu) &= -\frac{1}{2}\sin(x+2y),
\end{aligned}
\end{cases}
\end{equation}
then the analytical solution is presented
\begin{equation}
\begin{cases}
\begin{aligned}
u^*_1(x,y,t,\nu) &= e^{-5\nu t}\sin(x+2y),
\\ 
u^*_2(x,y,t,\nu) &= -\frac{1}{2}e^{-5\nu t}\sin(x+2y),
\\
P^*(x,y,t,\nu) &= C.
\end{aligned}
\end{cases}
\end{equation}
where $C$ is an arbitrary constant.

For the third test case in this paper, the employed neural network architecture, training configurations and CFD settings are identical to those in the first test case.

To simulate atmospheric pressure conditions, we prescribe $P^*(x,y,t,\nu) = 0$. In strictly consistent training setups, Fig. \ref{fig4.7} presents the comparison of $L^2$ relative errors for $u_1$ and $u_2$ among three algorithms under different Re number conditions at $t = 0.5$. Tr-PINNs are shown to possess significantly lower $L^2$ relative errors than conventional PINNs throughout all Re regimes, and perform drastically better than the error benchmarks of high-fidelity CFD simulations. Concretely, relative to conventional PINNs, Tr-PINNs realize average error reductions of 41.89\% for $u_1$ and 39.99\% for $u_2$; relative to high-fidelity CFD simulations, further average error reductions of 92.38\% for $u_1$ and 89.35\% for $u_2$ are accomplished.

Building upon the aforementioned analysis of $L^2$ relative errors, we further quantify RMSE of $u_1$ and $u_2$ for the three algorithms under uniformly controlled training conditions, across a broad range of Re cases at $t = 0.5$ (see Fig. \ref{fig4.8}). Notably, Tr-PINNs yield RMSE values that are consistently diminished relative to those of conventional PINNs across all tested Re regimes, and the predictive precision of Tr-PINNs even exceeds that of high-fidelity CFD simulations. Quantitatively, for $u_1$, Tr-PINNs attains a maximum RMSE reduction of 65.66\% over conventional PINNs and approximately 94.66\% relative to CFD simulations; for $u_2$, the maximum RMSE reduction against conventional PINNs is 65.64\%, whereas the reduction relative to CFD simulations reaches 89.99\%.

To better show its advantages, we select the visualization results at the operating condition of $t=0.5$, $\text{Re}=3141$ ($\nu=0.001$) to elaborate on the advantages of Tr-PINNs in detail with reference to Fig. \ref{fig4.9}.

As observed from panels (a)(b), (e)(f) and (i)(j) of Fig. \ref{fig4.9}, Tr-PINNs, PINNs and CFD methods exhibit excellent numerical fitting capabilities. For a fair comparison, we adopt the maximum absolute errors of $u_1$ and $u_2$ predicted by PINNs as the baseline threshold, and mark the regions where the absolute errors of Tr-PINNs and CFD exceed this threshold in yellow.

By analyzing the point-wise absolute error contours in (c)(d), (g)(h) and (k)(l) of Fig. \ref{fig4.9}. It can be clearly observed that the conventional PINNs method yields a relatively large residual at the peak point. Moreover, most data points appearing yellow in the CFD contour plots are characterized by absolute errors considerably larger than the peak absolute error of the PINNs method. In contrast, the overall contour of the Tr-PINNs method is presented in dark blue, with a slightly larger absolute error only observed in local regions. 

The point‑wise error contours between the predictions generated by Tr‑PINNs and the reference solutions obtained from CFD are further illustrated in panels (o) and (p) of Fig. \ref{fig4.9}. To suppress the disturbance caused by singular points in the CFD dataset, the error range is restricted to $[-0.001,0.001]$, and points beyond this interval are marked in yellow. 
It can be clearly observed that the errors are negligibly small in regions where both Tr-PINNs and CFD converge, while yellow regions persist in areas where CFD exhibits numerical inaccuracies. This sufficiently validates that Tr-PINNs is superior to conventional CFD methods in both prediction accuracy and numerical consistency.

In conclusion, the training time of Tr-PINNs is approximately 94 minutes, which is longer than the 44 minutes required for PINNs and also longer than the 104 seconds for CFD. Despite the marginal increase in training overhead, the millisecond-level inference efficiency of Tr-PINNs drastically reduces the computational cost for real-time prediction and engineering deployment, thereby offsetting the longer training duration.  
Owing to the inherent numerical errors and singular artifacts in conventional CFD, its reference benchmark displays significant deviations and non-physical fluctuations. In contrast to these distorted CFD solutions, Tr-PINNs yields an ultra-low, spatially homogeneous error distribution with no non-physical artifacts. This result clearly validates the prominent advantages of Tr-PINNs in prediction accuracy, numerical stability and computational practicality, showing its superiority over both traditional CFD and PINNs.
\begin{figure}[htbp]
  \centering
  \renewcommand{\thefigure}{4.7} 
  \includegraphics[width=0.85\textwidth, keepaspectratio]{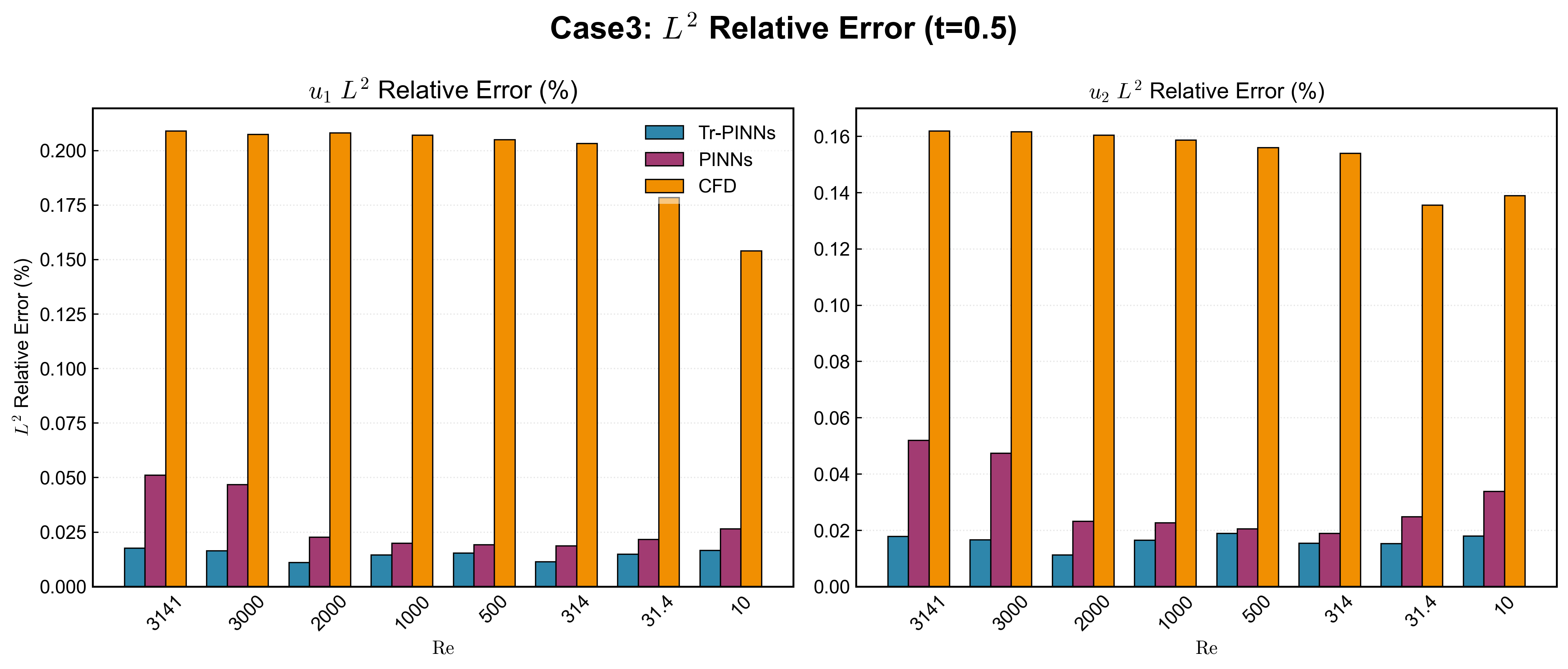}
  \caption{The third case: $L^2$ Relative Error of $u_1$ and $u_2$ under different working conditions at $t=0.5$.}
  \label{fig4.7}
\end{figure}

\begin{figure}[htbp]
  \centering
  \renewcommand{\thefigure}{4.8} 
  \includegraphics[width=0.85\textwidth, keepaspectratio]{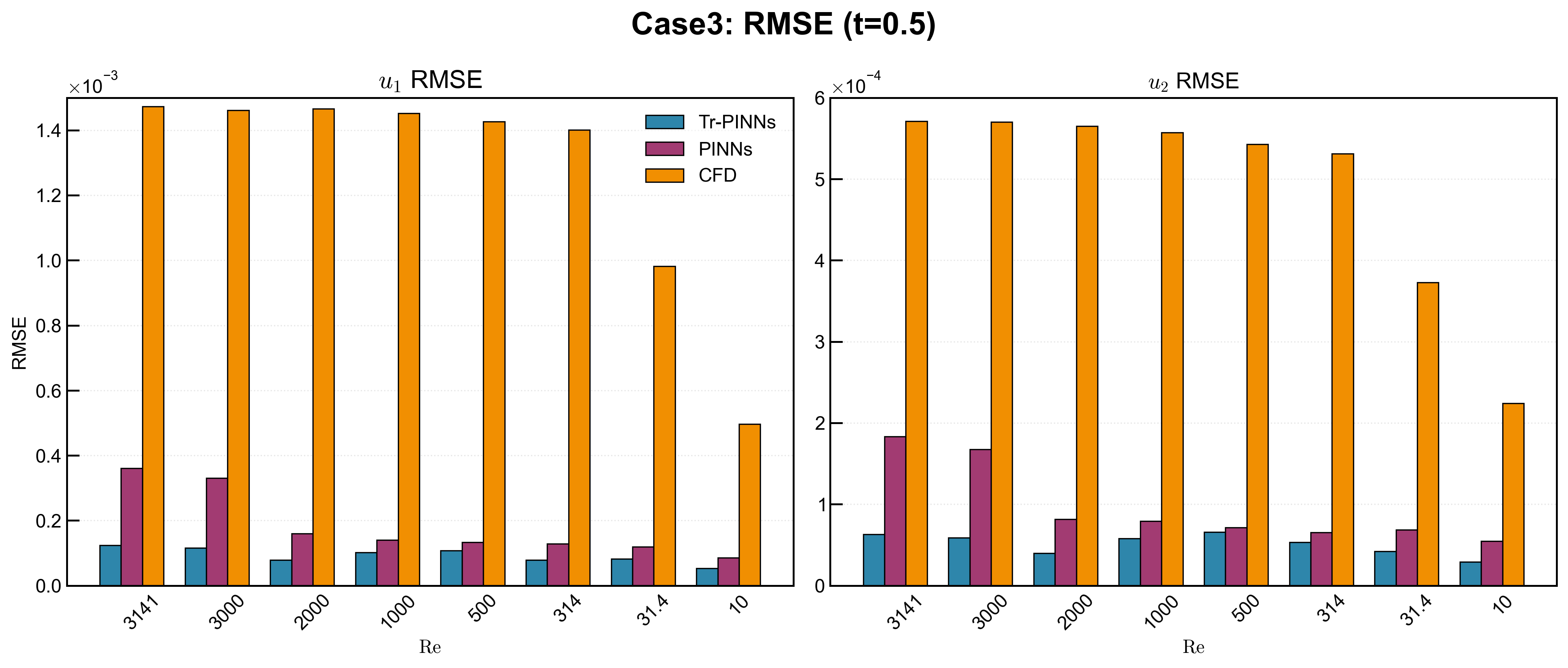}
  \caption{The third case: Root Mean Square Error of $u_1$ and $u_2$ under different working conditions at $t=0.5$.}
  \label{fig4.8}
\end{figure}

\begin{figure}[H]
  \centering
  \renewcommand{\thefigure}{4.9} 
  \includegraphics[width=1\textwidth, keepaspectratio]{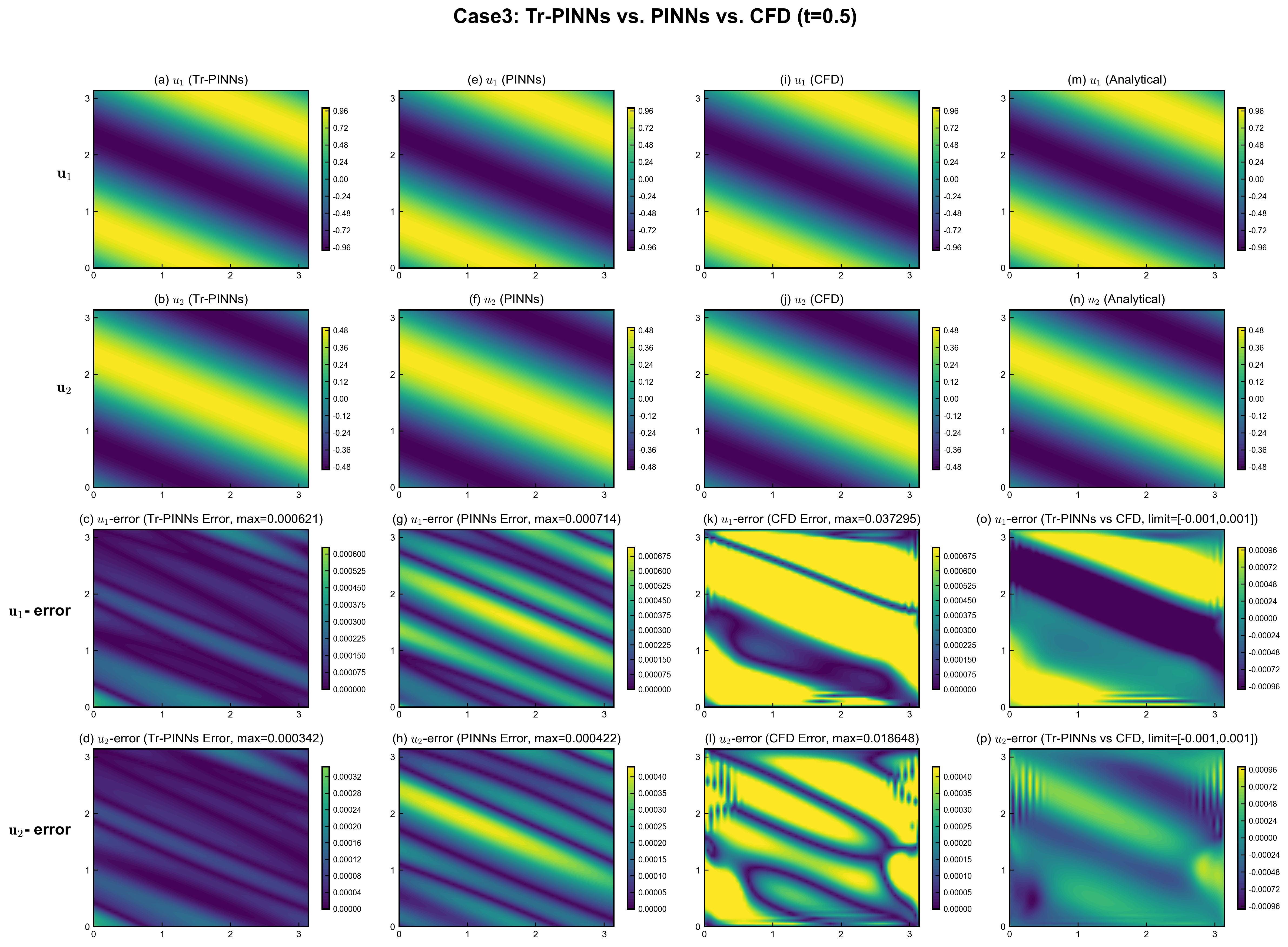}
  \caption{The third case: (a)(b) and (c)(d) show the predicted solutions and point-wise absolute errors of the Tr-PINNs algorithm; (e)(f) and (g)(h) show the predicted solutions and point-wise absolute errors of the PINNs algorithm; (i)(j) and (k)(l) show the numerical simulation results and point-wise absolute errors of OpenFOAM; (m)(n) denote the analytical solutions; (o)(p) show the point-wise errors between the Tr-PINNs predictions and OpenFOAM results.}
  \label{fig4.9}
\end{figure}

\section{Conclusion}
Compared with CFD methods, PINNs exhibit exceptionally high real-time efficiency in the inference stage phase, flexible mesh-free modeling and natural adaptability to multiphysics coupling. These merits effectively break through the core bottlenecks of traditional CFD in industrial engineering applications, including high computational latency, expensive modeling costs and insufficient adaptability to complex working conditions.

The Tr-PINNs algorithm proposed in this study achieves a remarkable improvement in boundary regularity through rigorous mathematical derivation and verification. The optimization mechanism embedded in this model further enhances its ability to capture key characteristics of complex physical fields. Particularly accurate simulation is realized for boundary regions, and excellent predictive performance is demonstrated in critical regions with high flow field gradients and peak values. This research provides a novel feasible idea and technical pathway for the development of numerical simulation and engineering applications in computational fluid dynamics.

\bigskip
{\bf Acknowledgments}
This work is supported by the Science and Technology Project of Beijing Municipal
 Education Commission, China (KM202210005011).

{\bf Conflict of interests}
This work does not have any conflicts of interest.

\begin{appendices}
  \numberwithin{equation}{section} 
  \titleformat{\section}
    {\normalfont\Large\bfseries}  
    {Appendix \thesection}        
    {0em}                        
    {}                            
    []                            

  \section{}  
\begin{Lemma} \label{lemma3}
Let $\Omega \subseteq \mathbb{R}^n$ ($n \geq 2$) be a bounded domain $\Omega$ with locally lipschitzian or $C^2$ boundary $\partial\Omega$. For all $(f,g) \in H^{-1}(\Omega) \times V^{\frac{1}{2}}(\partial\Omega)$, the following  nonhomogeneous Stokes 
problem
\begin{equation*}
  -\nu\Delta w + \nabla \pi =f \quad and \quad \nabla \cdot  w=b \quad in \, \Omega,\quad w=g\quad on\, \partial \Omega,
\end{equation*}
admits a unique solution $(w,\pi) \in H^1(\Omega) \times L^2(\Omega) $. Moreover the following estimate holds:\\
\begin{equation} 
\|w\|_{H^1(\Omega)}+\|\pi\|_{L^2(\Omega)}\leq C(\|f\|_{H^{-1}(\Omega)}+\|g\|_{H^{\frac{1}{2}}(\partial \Omega)} + \|b\|_{L^2(\Omega)}),
\end{equation}
where $C=C(n,\Omega)$.
\end{Lemma}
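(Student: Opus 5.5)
The statement is Lemma \ref{lemma3}, the well-posedness of the nonhomogeneous Stokes problem. The plan is the classical reduction to the homogeneous problem via a divergence-correcting lift of the boundary data, followed by Lax--Milgram on the divergence-free space $V\cap\mathbb{H}^1_0(\Omega)$ and recovery of the pressure by de Rham's theorem / the Ne\v{c}as inequality. I read $V^{\frac12}(\partial\Omega)$ as $\mathbb{H}^{\frac12}(\partial\Omega)$ together with the compatibility condition $\int_{\partial\Omega} g\cdot n\,ds=\int_\Omega b\,dx$. This condition is necessary by the divergence theorem, and I will assume it is part of the hypotheses.

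\textbf{Step 1: lifting.} By the trace theorem on a Lipschitz domain there is $w_1\in\mathbb{H}^1(\Omega)$ with $w_1|_{\partial\Omega}=g$ and $\|w_1\|_{H^1}\le C\|g\|_{H^{1/2}(\partial\Omega)}$. Set $\beta=b-\nabla\cdot w_1\in L^2(\Omega)$. The compatibility condition gives $\int_\Omega\beta\,dx=0$.

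By the Bogovski\u{\i} operator, equivalently the surjectivity of $\mathrm{div}:\mathbb{H}^1_0(\Omega)\to L^2_0(\Omega)$ on Lipschitz domains, there is $w_2\in\mathbb{H}^1_0(\Omega)$ with $\nabla\cdot w_2=\beta$ and $\|w_2\|_{H^1}\le C\|\beta\|_{L^2}\le C(\|b\|_{L^2}+\|g\|_{H^{1/2}})$. Then $w_0:=w_1+w_2$ satisfies $w_0|_{\partial\Omega}=g$, $\nabla\cdot w_0=b$, and $w_0$ is bounded in $H^1$ by the data.

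\textbf{Step 2: homogeneous problem.} Write $w=w_0+v$ with $v\in V\cap\mathbb{H}^1_0(\Omega)$. The unknown $v$ solves
$$\nu(\nabla v,\nabla\phi)=\langle f,\phi\rangle-\nu(\nabla w_0,\nabla\phi)\quad\text{for all }\phi\in V\cap\mathbb{H}^1_0(\Omega).$$
The form $\nu(\nabla v,\nabla\phi)$ is coercive on this closed subspace of $\mathbb{H}^1_0$ by Poincar\'e's inequality. The right-hand side is a bounded functional of norm at most $\|f\|_{H^{-1}}+\nu\|w_0\|_{H^1}$. Lax--Milgram then gives a unique $v$ together with the bound $\|v\|_{H^1}\le C(\|f\|_{H^{-1}}+\|w_0\|_{H^1})$.

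\textbf{Step 3: pressure.} The functional $F(\phi)=\langle f,\phi\rangle-\nu(\nabla w,\nabla\phi)$ belongs to $H^{-1}(\Omega)$ and vanishes on divergence-free fields in $\mathbb{H}^1_0$. By de Rham's theorem, which on Lipschitz domains follows from the closed range of $\mathrm{div}$ just used, there is $\pi\in L^2_0(\Omega)$ with $F=\nabla\pi$.

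The Ne\v{c}as inequality $\|\pi\|_{L^2}\le C\|\nabla\pi\|_{H^{-1}}$ gives $\|\pi\|_{L^2}\le C\|F\|_{H^{-1}}$, which is bounded by the data. Collecting the bounds from Steps 1--3 yields the stated estimate.

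\textbf{Uniqueness.} The difference of two solutions solves the problem with zero data, so it lies in $V\cap\mathbb{H}^1_0$. Testing with the velocity difference itself gives zero velocity, and the Ne\v{c}as inequality then gives a zero pressure. The pressure is therefore unique only up to the normalization $\pi\in L^2_0(\Omega)$; the statement's ``unique $\pi\in L^2$'' must be read modulo constants.

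\textbf{Main obstacle.} The only substantive ingredient is the existence of a bounded right inverse of the divergence on $\mathbb{H}^1_0(\Omega)$ for Lipschitz domains, i.e.\ the Bogovski\u{\i}/Ne\v{c}as theory. I will cite it (Girault--Raviart, Galdi) rather than reprove it. Its other role is to fix the compatibility hypothesis and the mean-zero normalization, which the statement leaves implicit and which I will state explicitly.
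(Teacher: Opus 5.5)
Your proposal is correct. It is the standard argument (trace lifting corrected by a Bogovski\u{\i} field, then Lax--Milgram on the divergence-free subspace of $\mathbb{H}^1_0(\Omega)$, then the pressure via de Rham and the Ne\v{c}as inequality) behind the result the paper simply cites without proof (Theorem 1.1 and Exercise 1.1 of Chapter 4 of its reference), so the two approaches coincide. The compatibility condition $\int_{\partial\Omega} g\cdot n\,ds=\int_\Omega b\,dx$ and the normalization $\pi\in L^2_0(\Omega)$ that you add are genuinely needed; your constants also depend on $\nu$, unavoidably by scaling, so the estimate holds with $C=C(n,\Omega,\nu)$ rather than the stated $C(n,\Omega)$.
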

This result is stated in [\cite{G1998}, Theorem 1.1 and Exercise 1.1, Chapter 4].
\par\vspace{8pt}
 $\mathbf{Proof~of~Theorem~\ref{Unique1.1}:}$  
\setcounter{equation}{0} 
  \renewcommand{\theequation}{\arabic{equation}}
Inspired by \cite{R2007}, for equation \eqref{eq1.1}, we seek a solution $u$ of the form $u=w+v$,
  where $w$ is  a weak solution of the following equation
\begin{equation}\label{divequ1}
  -\nu\Delta w(t) + \nabla \pi(t) =0 \quad and \quad \nabla \cdot w(t)=0 \quad in \, \Omega,\quad w(t)=g(t)\quad on\, \partial \Omega,
\end{equation} 
for all $t \in [0,T]$, and $v$ is a solution to 
\begin{eqnarray} \label{equation3.2}
\begin{aligned}
  & \left\{
  \begin{array}{ll}
  \frac{\partial v}{\partial t}- \nu \Delta v + w \cdot \nabla v + v \cdot \nabla w+ v\cdot \nabla v + \nabla p = F,  &x\in\Omega,\quad t>0,  \\
     \nabla \cdot v=0,   &x\in\Omega,\quad t>0,\\
    v |_{t=0}=u_0 - w(0),   &x\in\Omega, \\
     v  =0,   &x\in\partial\Omega, ~~ t>0,\\
      \end{array}
  \right.
  \end{aligned}
 \end{eqnarray}   
where $F = f - \frac{\partial w}{\partial t} -  w \cdot \nabla w$.\\
From Lemma \ref{lemma3}, we know that when $g \in H^1(0,T;H^{\frac{1}{2}}(\partial \Omega))$, the equation \eqref{divequ1}  exists a unique solution $w \in H^1(0,T;H^1(\Omega))$,  thus $\frac{\partial w}{\partial t} \in L^2(0,T;H^1(\Omega))$ and $v_0=u_0 - w(0)\in H$.
Furthermore,  Simple computation shows that $F \in L^2(0,T;V')$,  then one can construct the approximate solutions of \eqref{equation3.2} by the classical
Faedo-Galerkin method and obtained the unique weak solution of equation \eqref{equation3.2} by compactness argument \cite{L1969}.
\hfill $\square$

\begin{Lemma}\label{lemma6}
Suppose \(\mathcal{X} \subset \mathbb{R}^n\) (for \(n \geq 1\)) is a measurable domain.  Let \(\mu\) be a probability measure on \(\mathcal{X}\). For all \(f \in L^2(\mathcal{X}, \mu)\), 
let \(\{\mathbf{x}_1, \mathbf{x}_2, \dots, \mathbf{x}_M\} \subset \mathcal{X}\) be independent and identically distributed samples drawn from \(\mu\). Define the Monte Carlo approximation of \(I = \int_{\mathcal{X}} f(\mathbf{x}) d\mu(\mathbf{x})\) as:
\begin{equation*}
  \hat{I}_M = \frac{1}{M} \sum_{i=1}^M f(\mathbf{x}_i).
\end{equation*}
This approximation admits the following two properties.\\
1) Unbiasedness:
\begin{equation}
 \mathbb{E}[\hat{I}_M] = I, \notag
\end{equation}
 where \(\mathbb{E}[\cdot]\) denotes the expectation with respect to the measure \(\mu\).\\
2) For any confidence level \(\delta \in (0,1)\), the inequality holds
\begin{equation} \notag
 \mathbb{P}\left( |\hat{I}_M - I| \leq \sigma \sqrt{\frac{\log(2/\delta)}{2M}} \right) \geq 1 - \delta  ,
\end{equation}
where \(\sigma^2 = \text{Var}(f) = \int_{\mathcal{X}} (f(\mathbf{x}) - I)^2 d\mu(\mathbf{x})\) is the finite variance of f.
The bound in the error estimate depends only on the variance \(\sigma^2\) of f, the sample size M and the confidence level \(\delta\).
\end{Lemma}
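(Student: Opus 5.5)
The plan is to treat the two properties separately. Unbiasedness is a linearity-of-expectation statement. The deviation bound will come from a Chernoff (exponential moment) argument applied to the centred sample mean $\hat I_M - I$.

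\emph{Unbiasedness.} Each $\mathbf{x}_i$ has law $\mu$, and $f\in L^2(\mathcal{X},\mu)\subset L^1(\mathcal{X},\mu)$ because $\mu$ is a probability measure. Hence $\mathbb{E}[f(\mathbf{x}_i)]=\int_{\mathcal{X}} f\,d\mu = I$ for every $i$. Linearity then gives $\mathbb{E}[\hat I_M]=\frac1M\sum_{i=1}^M I = I$. As a by-product of independence, $\mathrm{Var}(\hat I_M)=\sigma^2/M$, and this variance identity will be used to calibrate the second part.

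\emph{Deviation bound.} Set $Y_i=f(\mathbf{x}_i)-I$; these are i.i.d., centred, with variance $\sigma^2$. For $\lambda>0$, Markov's inequality and independence give
\begin{equation*}
\mathbb{P}\bigl(\hat I_M - I \ge s\bigr)\le e^{-\lambda s}\,\bigl(\mathbb{E}\,e^{\lambda Y_1/M}\bigr)^M .
\end{equation*}
The key input is a sub-Gaussian moment generating function bound for $Y_1$ of Hoeffding type with scale $\sigma$, namely
\begin{equation*}
\mathbb{E}\,e^{\theta Y_1}\le e^{\theta^2\sigma^2/8}\quad\text{for all }\theta\in\mathbb{R}.
\end{equation*}
Given this bound, the right-hand side is at most $\exp(-\lambda s+\lambda^2\sigma^2/(8M))$. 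Optimising at $\lambda=4Ms/\sigma^2$ yields $\exp(-2Ms^2/\sigma^2)$. The same argument applied to $-Y_i$ handles the lower tail, so a union bound gives
\begin{equation*}
\mathbb{P}\bigl(|\hat I_M-I|>s\bigr)\le 2e^{-2Ms^2/\sigma^2}.
\end{equation*}
Setting the right-hand side equal to $\delta$ gives $s=\sigma\sqrt{\log(2/\delta)/(2M)}$, which is exactly the stated inequality.

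\emph{Main obstacle.} The hard step is justifying the exponential-moment bound above with $\sigma$ as the scale. For a centred variable taking values in an interval of length $\ell$, Hoeffding's lemma gives this bound with $\ell$ in place of $\sigma$. Square-integrability alone does not control exponential moments, so I cannot expect to derive the bound from $f\in L^2$ by itself. I would therefore make explicit that, in the setting of Lemma \ref{lemma6}, $\sigma$ is understood as the sub-Gaussian scale of $f(\mathbf{x})-I$. This is the situation for the training residuals in \eqref{minque}: they are built from smooth network outputs with parameters in the bounded set $\Theta$ of Definition \ref{5.1}, evaluated on a bounded domain, so they are essentially bounded. Under that reading the Chernoff computation above closes the proof.

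If the bound must hold with the literal variance for every $f\in L^2$, this is where the argument breaks down. I would first test it on a heavy-tailed $f$, whose sample mean deviations decay only polynomially in the deviation size, not like $e^{-cs^2}$. I expect such an example to show that the exponential-moment hypothesis cannot be dropped.
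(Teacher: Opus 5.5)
\textbf{Unbiasedness and the Chernoff step.} Your unbiasedness argument and the Chernoff optimisation are correct. The step you isolate as the obstacle, however, cannot be filled, and your proposed reading does not rescue it. With $\sigma^2=\mathrm{Var}(f)$, the bound $\mathbb{E}\,e^{\theta Y_1}\le e^{\theta^2\sigma^2/8}$ fails for \emph{every} non-degenerate $Y_1$. Near $\theta=0$ the left side is $1+\theta^2\sigma^2/2+o(\theta^2)$, while the right side is $1+\theta^2\sigma^2/8+o(\theta^2)$. Any scale $\ell$ with $\mathbb{E}\,e^{\theta Y_1}\le e^{\theta^2\ell^2/8}$ for all $\theta$ must therefore satisfy $\ell\ge 2\sigma$. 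So ``$\sigma$ is the sub-Gaussian scale'' is a different statement, not an interpretation of this one.

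Boundedness of the network residuals does not close the gap either. For $a\le f\le b$, Hoeffding's lemma supplies $\ell=b-a$, and $b-a\ge 2\sigma$ by Popoviciu's inequality. What your argument then proves is Hoeffding's inequality: $|\hat I_M-I|\le (b-a)\sqrt{\log(2/\delta)/(2M)}$ with probability at least $1-\delta$. That is not the variance bound in part 2).

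\textbf{Why part 2) fails.} Your diagnosis of the failure is also off: this is not a heavy-tail issue. For every non-constant $f\in L^2(\mathcal{X},\mu)$, the central limit theorem gives that $\sqrt{M}(\hat I_M-I)/\sigma$ converges in distribution to a standard normal $Z$. Hence the probability in part 2) tends to $\mathbb{P}\bigl(|Z|\le\sqrt{\log(2/\delta)/2}\bigr)\approx 0.83$ for $\delta=0.05$. For $f(\mathbf{x})=x_1$ under a Gaussian $\mu$, it equals this value for every $M$. A bounded finite example: take $M=1$, $f=\mathbf{1}_A$ with $\mu(A)=0.1$, and $\delta=0.05$; the probability is $0.9$. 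So part 2) is false as written, for bounded and even Gaussian $f$, and no proof of it exists.

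The paper supplies no argument of its own; it only points to a textbook. The intended inequality is presumably Hoeffding's, with the range $b-a$ where the paper writes $\sigma$. A correct lemma has three options:
\begin{enumerate}
\item Assume $f$ bounded and use $b-a$. Your Chernoff argument then goes through verbatim.
\item Use Bernstein's inequality. Its leading term is $\sigma\sqrt{2\log(2/\delta)/M}$, twice the stated one as the CLT forces, plus an $O(M^{-1})$ range-dependent term.
\item Under the bare assumption $f\in L^2$, settle for Chebyshev's bound $|\hat I_M-I|\le\sigma/\sqrt{M\delta}$.
\end{enumerate}
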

This is stated in [\cite{BZ2020}, Chapter 1].
\hfill $\square$
\end{appendices}

\end{document}